\declaretheoremstyle[bodyfont=\normalfont]{noncursive}
\declaretheorem{theorem}
\declaretheorem[numberwithin=section]{lemma}
\declaretheorem[numberlike=lemma]{proposition}
\declaretheorem[style=noncursive,numberlike=lemma]{definition}
\declaretheorem[style=noncursive,numberlike=lemma]{remark}
\newcommand{\im}{\ensuremath{\mbox{\rm Im}\,}}
\newcommand{\CC}[1]{\mathbb{C}^{#1}}
\newcommand{\CP}[1]{\mathbb{CP}^{#1}}
\numberwithin{equation}{section}
\def\label#1{\label{#1}{\sf (#1)}~}
\def\endpf{\hbox{\vrule height1.5ex width.5em}}
\def\<{\langle}
\def\>{\rangle}
\numberwithin{equation}{section}
\def\<{\langle}
\def\>{\rangle}
\def\-{\overline}
\def\endpf{\hbox{\vrule height1.5ex width.5em}}
\def\endpf{\hbox{\vrule height1.5ex width.5em}}
\def\-{\overline}
\def\endpf{\hbox{\vrule height1.5ex width.5em}}
\def\endpf{\hbox{\vrule height1.5ex width.5em}}
\def\-{\overline}
\begin{document}

\title{\bf On the embeddability of  real hypersurfaces into hyperquadrics }
\author{Ilya Kossovskiy}
\address{\parbox{0.8\linewidth}{%
               Department of Mathematics and Statistics, Masaryk University, Brno, Czechia/\\
               Faculty of Mathematics, University of Vienna, Austria}
    }
\email{kossovskiyi@math.muni.cz, ilya.kossovskiy@univie.ac.at}
\author{Ming Xiao}
\address{Department of Mathematics, University of Illinois at Urbana-Champaign}
\email{mingxiao@illinois.edu}

\begin{abstract}
A well known result of Forstneri\'c \cite{For} states that most real-analytic strictly pseudoconvex hypersurfaces in complex space are not holomorphically embeddable into spheres of higher dimension. A more recent result by Forstneri\'c \cite{For1} states even more: most real-analytic hypersurfaces do not admit  a holomorphic embedding even into a merely algebraic hypersurface of higher dimension, in particular, a hyperquadric. Explicit examples of real-analytic hypersurfaces non-embaddable into hyperquadrics were obtained by Zaitsev \cite{Z}. In contrast, the classical theorem of Webster \cite{webster} asserts that {every} {\em real-algebraic} Levi-nondegenerate hypersurface admits a transverse holomorphic embedding into a nondegenerate real hyperquadric in complex space.

In this paper, we provide {\em effective} results on the non-embeddability of real-analytic hypersurfaces into a hyperquadric. We show that, for any $N >n \geq 1$, the defining functions $\varphi(z,\bar z,u)$ of all  real-analytic hypersurfaces $M=\{v=\varphi(z,\bar z,u)\}\subset\CC{n+1}$ containing Levi-nondegenerate points and locally transversally holomorphically embeddable into some hyperquadric $\mathcal Q\subset\CC{N+1}$ satisfy an {\em universal} algebraic partial differential equation $D(\varphi)=0$, where the algebraic-differential operator $D=D(n,N)$ depends on $n, N$ only. To the best of our knowledge, this is the first effective result characterizing  real-analytic hypersurfaces  embeddable into a hyperquadric of higher dimension.
As an application, we show that  for every $n,N$ as above there exists $\mu=\mu(n,N)$ such that a Zariski generic real-analytic hypersurface $M\subset\CC{n+1}$ of degree $\geq \mu$ is not transversally holomorphically embeddable into any hyperquadric $\mathcal Q\subset\CC{N+1}$. We also provide an explicit upper bound for $\mu$ in terms of $n,N$. To the best of our knowledge, this
gives the first effective lower bound for the CR-complexity of a Zariski generic real-algebraic hypersurface in complex space of a fixed degree.
\end{abstract}

\maketitle

\tableofcontents

\section{Introduction}
Let $M\subset\CC{n+1},\,n\geq 1$ be a real-analytic Levi-nondegenerate hypersurface. The celebrated theory due to Chern and Moser \cite{CM} (see also Cartan \cite{cartan}) asserts that only a very rare such $M$ admits a local biholomorphic mapping into a nondegenerate real hyperquadric
$$\mathcal Q=\bigl\{[\xi_0,...,\xi_{n+1}]\in\CP{n+1}:\,\, |\xi_0|^2+...+|\xi_k|^2-|\xi_{k+1}|^2-...-|\xi_{n+1}|^2=0\bigr\}.$$
%(Recall that hyperquadrics surve as {\em models} for Levi-nondegenerate hypersurfaces in Chern-Moser theory).
Moreover, Chern and Moser show that the existence of the desired biholomorphic mapping into a hyperquadric is equivalent to vanishing of a special {\em CR-curvature} of a real hypersurface $M$.

A natural problem to pursue in view of the Chern-Moser theory is the possibility to construct a local holomorphic embedding $F:\,(M,p)\mapsto (\mathcal Q,p')$ of a real-analytic hypersurface $M\subset\CC{n+1},\,n\geq 1$ into a hyperquadric $\mathcal Q\subset\CC{N+1}$ {\em of higher dimension}. Here by a  holomorphic  embedding $F$ of $M\subset {\mathbb C}^n$ into $M'
\subset {\mathbb C}^N$, we mean a holomorphic embedding of an open
neighborhood $U$ of $M$ in $\CC{n}$ into a neighborhood $U'$ of $M'$ in $\CC{N}$, sending
$M$ into $M'$. One usually presumes certain nondegeneracy conditions for the mapping $F$, such as {\em transversality} (the latter means that $dF(\CC{n+1})|_p\not\subset T_{p'} \mathcal Q$).

The existence of a transversal holomorphic embedding into a hyperquadric can be viewed as a {\em finite CR-complexity } of a real hypersurface (see, e.g., Ebenfelt and Shroff \cite{es}). The latter number is the minimal possible difference $N-n$ between the CR-dimensions of the target hyperquadric and the source real hypersurface. An alternative approach to complexity in CR-geometry is due to the school of D'Angelo, see, e.g., \cite{Da,Da1,DL2}.  A strong motivation for studying the embedding problem is the celebrated theorem of Webster \cite{webster} which states that {\em every} real-algebraic Levi-nondegenerate hypersurface admits a transverse holomorphic embedding into a nondegenerate real hyperquadric in complex space. Thus, every algebraic Levi-nondegenerate hypersurface has a finite CR-complexity.

Since the work of Webster, a large number of publications have been dedicated to studying holomorphic embeddings of real hypersurfaces into hyperquadrics. However, despite of the extensive research in this direction, the following problem remains widely open:

%Since the cited work of Webster, a large number of further publications since 1970's till present have been dedicated to studying holomorphic embeddings of real hypersurfaces into hyperquadrics. However,  despite of %numerous deep results and theories existing in this direction, the following problem remains widely open.

\smallskip

\noindent{\bf  Problem 1:}~{ Characterize the embeddability of a real hypersurface $M \subset \mathbb{C}^{n+1}$ into a hyperquadric $\mathcal{Q}^{2N+1} \subset \mathbb{C}^{N+1}.$ More precisely, find a necessary and sufficient condition for $M$ to admit a  transversal holomorphic embedding into some $\mathcal{Q}^{2N+1} \subset \mathbb{C}^{N+1}.$}

\smallskip

We emphasize in connection with Problem 1 that not every Levi-nondegenerate real-analytic hypersurface can be transversally holomorphically embedded into a hyperquadric. Indeed, a well known result of Forstneri\'c \cite{For} (see also Faran \cite{Fa}) states that most real-analytic strictly pseudoconvex hypersurfaces are not holomorphically embeddable into spheres of higher dimension. A more recent result by Forstneri\'c \cite{For1} states even more: most real-analytic hypersurfaces do not admit  a holomorphic embedding even into a merely algebraic hypersurface of higher dimension. Importantly, both cited theorems are proved by showing that the set of embeddable hypersurfaces is  a set of  first Baire category. An important step towards understanding the embeddability property was done by Zaitsev \cite{Z}, who obtained explicit examples of Levi-nondegenerate real-analytic hypersurfaces  that are not transversally holomorphically embeddable into any hyperquadrics. We also mention  the recent work of Huang and Zaitsev \cite{HZ} and Huang and Zhang \cite{HZh},  where the authors construct concrete algebraic Levi-nondegenerate
hypersurfaces with positive signature which can not be holomorphically embedded into a hyperquadric with the same signature
of any dimension.

 However, the cited results still leave open the question on an {\em effective} characterization of the set of real-analytic Levi-nondegenerate hypersurfaces in $\CC{n+1}$, admitting a local transversal holomorphic embedding into a hyperquadric in $\CC{N+1}$. That is, we are searching for a more constructive characterization of the set of embeddable hypersurfaces than the one in \cite{For1}. \autoref{T1} below provides such a characterization for {\em any} fixed $n,N$. Namely, we show that for any fixed $n,N$ with $n\geq 1,\,n< N$  the set of embeddable hypersurfaces $M=\{v=\varphi(z,\bar z,u)\}\subset\CC{n+1}$ satisfies an {\em universal} algebraic partial differential equation $$D(\varphi)=0,$$ where the differential-algebraic operator $D=D(n,N)$ depends on $n,N$ only. Thus, the defining functions of embeddable hypersurfaces form a subset of a {\em differential-algebraic set}.
Following the method of the present paper, each differential-algebraic operator $D(n,N)$ can be {\em effectively} computed (see \autoref{explicit} below), as well as an effective bound for its degree can be obtained immediately (see Appendix I).

The other question addressed in the paper is connected to Webster's embedding theorem mentioned above. Motivated by embedding theorems in various geometries (such as Whitney  embedding theorem in differential topology and Remmert theorem in the Stein space theory) it is natural, in view of Webster's theorem, to ask the following.

\smallskip

\noindent{\bf Problem 2.}~{\em Is there a uniform embedding dimension $N$ which only depends on $n$
such that all Levi-nondegenerate real-algebraic hypersurfaces $M \subset \mathbb{C}^{n+1}$ can be transversally holomorphically
embedded into a hyperquadric of suitable signature in $\mathbb{C}^{N+1}$ ? In other words, is there a uniform upper bound for the CR-complexity of all Levi-nondegenerate real-algebraic hypersurfaces $M\subset\CC{n+1}$? }

\smallskip\rm

A closely related problem is as follows. 

\smallskip

\noindent{\bf Problem 3.}~{\em Provide an effective bound for the CR-complexity of a (generic) real-algebraic hypersurface $M\subset\CC{n+1}$ of a fixed degree $k$ in terms of $n$ and $k$.}

\smallskip\rm

By applying \autoref{T1},  we give a negative answer to Problem 2 (see \autoref{T2}). Moreover, \autoref{T2} gives an {\em explicit} constant $\mu=\mu(n,N)$ such that a Zariski-generic algebraic hypersurface of any fixed degree $k\geq \mu$ is not transversally holomorphically embeddable into any hyperquadric in $\CC{N+1}$, thus providing a solution for Problem 3.

%There are plenty of  related results on the study of embedding problems into hyperquadrics. The techniques that has been applied in this subject includes Chern-Moser normal form theory, Segre family method, second fundamental from, etc.  The reader is referred to \cite{bh},\cite{beh}, \cite{ehz1, ehz2}, \cite{es}, \cite{Z}, etc. We in this paper apply a
%dynamical method by making use of the associated systems of PDEs to a Levi-nondegenerate hypersurface. See Section 2.2 for more details.

% Much attention has also been paid to the particular case when the source manifold $M$  is assumed to be strongly pseudoconvex and  the target is the unit sphere $\mathbb{S}^{2N+1} \subset \mathbb{C}^{N+1} .$ Among the many related articles, we mention here \cite{Fa},  \cite{For} \cite{HZ}, \cite{Z},   \cite{HLX}, \cite{EDS}, etc.

We now formulate our results in detail. We first recall the concept of a  differential-algebraic operator. Let $n,l\geq 1$ be integers and $P$ be a polynomial defined on the space $J^l(\CC{n},\CC{})$ of jets of maps from $\CC{n}$ to $\CC{}$. Then $P$ uniquely defines a {\em differential-algebraic operator} $\mathcal D=\mathcal D(P)$, which is the differential operator acting on an analytic function $\rho:\,U\mapsto\CC{}$ by
$$\mathcal D(\rho):= P(j^l\rho)$$
 (here $U\subset\CC{n}$ is a domain). The integer $l$ is called its {\em order}. We  call a differential-algebraic operator {\em shift-invariant}, if it is invariant under shifts in $j^0\rho$ (that is, $\mathcal D(\rho)$ does not depend on $z_1,...,z_n,\rho$ explicitly and depends on  derivatives of $\rho$ of order at least $1$).

\begin{theorem} \label{T1}
For any integers $N> n\geq 1$, there exists a universal non-zero shift-invariant differential-algebraic operator $D=D(n,N)$
such that the following holds. If
a real-analytic hypersurface $M \subset \mathbb{C}^{n+1}$ with a defining equation
$$v=\varphi(z, \overline{z},u)$$
contains at least one Levi-nondegenerate point and admits a local transverse holomorphic embedding into a hyperquadric $\mathcal{Q}^{2N+1}\subset\mathbb{C}^{N+1}$ near some point $p_{0} \in M$, then

$$D(\varphi)\equiv 0.$$
\end{theorem}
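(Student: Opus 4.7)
The strategy is to translate embeddability of $M$ into a finite-parameter algebraic problem on the jet of $\varphi$ and then eliminate the parameters. By the hypothesis that $M$ contains a Levi-nondegenerate point and by real-analytic continuation of $F$ along a path in $M$ from $p_0$ to such a point, we may assume $p_0$ is itself Levi-nondegenerate. Place $M$ into Chern--Moser normal form at $p_0$ and the target hyperquadric $\mathcal{Q}^{2N+1}$ into its standard form at $F(p_0)$; writing $F = (f_1,\dots,f_N, g)$ and complexifying $\bar z \mapsto \chi$, $\bar w \mapsto \omega$, the embedding condition becomes the identity
$$\Psi\bigl(F(z,w),\,\bar F(\chi,\omega)\bigr) \;\equiv\; 0 \quad\text{on}\quad \bigl\{\,w - \omega = 2i\,\varphi\bigl(z,\chi,\tfrac{w+\omega}{2}\bigr)\bigr\},$$
where $\Psi$ is the complexified defining polynomial of $\mathcal{Q}^{2N+1}$. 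Transversality of $F$ lets us normalize the low-order jet of $F$ so that only finitely many higher Taylor coefficients remain free.

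Next I would carry out a finite jet-determination procedure for $F$ in the spirit of Chern--Moser. Differentiating the complexified embedding identity iteratively and using Levi-nondegeneracy at $p_0$ to invert the leading matrix at each step, one expresses every Taylor coefficient of $F$ of order $> k_0$ as a polynomial expression in (a) a finite collection $\lambda \in \mathbb{C}^L$ of free jet-parameters of $F$ at orders $\leq k_0$, and (b) the relevant Taylor coefficients of $\varphi$; here $k_0$ and $L$ depend only on $n$ and $N$. Substituting these polynomial formulas back into the embedding identity and collecting the coefficient of each monomial in $(z,\chi,u)$ yields, for every order $k > k_0$, a polynomial compatibility equation
$$P_k\bigl(\lambda,\, j^k\varphi\bigr) \;=\; 0.$$

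The number of parameters $\lambda$ remains fixed at $L = L(n,N)$, while the number of compatibility equations grows without bound in $k$, so for some explicit $K = K(n,N)$ the system becomes overdetermined in $\lambda$. Eliminating $\lambda$ from $\{P_k\}_{k \leq K}$ by resultants yields a polynomial relation $D_{\sigma} = D_{\sigma}(n,N)$ in the $K$-jet of $\varphi$ alone, which must vanish on any $\varphi$ embeddable into a hyperquadric of signature $\sigma$. Running the argument once per signature $\sigma$ (of which there are only finitely many) and taking the product yields a single universal shift-invariant differential-algebraic operator $D(n,N)$ that vanishes whenever $M$ is transversally embeddable into some hyperquadric in $\mathbb{C}^{N+1}$.

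The principal obstacle is ensuring that $D(n,N)$ is not identically zero. I would settle this by a dimension count: as $k \to \infty$, the real dimension of the image of the parameter-to-jet map grows like $k^{n+1}$ (contributed by the $(n+1)$-variable holomorphic components of $F$), whereas the target jet-space dimension grows like $k^{2n+1}$ (the number of Taylor coefficients of a real function of $2n+1$ real variables). Thus for $K$ sufficiently large the image is a proper constructible subset, and any nonzero polynomial vanishing on it is a valid $D$; this same dimension gap is the quantitative mechanism behind Forstneri\'c's Baire-category result \cite{For1} and produces the explicit degree bounds established in Appendix~I.
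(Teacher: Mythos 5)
The central step of your proposal --- that transversality plus Levi-nondegeneracy of the source forces every Taylor coefficient of $F$ of order $>k_0$ to be a polynomial in finitely many parameters $\lambda\in\mathbb{C}^L$ and in the jet of $\varphi$ --- is false for hyperquadric targets, and this breaks the whole elimination scheme. Concretely, let $M=\{\mathrm{Im}\,w=|z|^2\}\subset\mathbb{C}^2$ and $\mathcal{Q}=\{\mathrm{Im}\,W=|Z_1|^2+|Z_2|^2-|Z_3|^2\}\subset\mathbb{C}^4$. For \emph{every} holomorphic $\phi$ with $\phi(0)=0$, the map $F_\phi=(z,\phi,\phi,w)$ is a transversal holomorphic embedding of $M$ into $\mathcal{Q}$: the two middle components cancel in the Hermitian form, and the normal component $g=w$ gives CR transversality. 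Thus there exist transversal embeddings with arbitrarily many free Taylor coefficients, so no $k_0(n,N)$, no $L(n,N)$, and no polynomial compatibility equations $P_k(\lambda,j^k\varphi)=0$ can exist; the system never ``becomes overdetermined in $\lambda$'', and there is no resultant to take. Since \autoref{T1} quantifies over hyperquadrics of \emph{every} signature, this is fatal as written. The missing idea is precisely the one the paper builds in: the infinite flexibility comes from null directions of an indefinite form, and (by polarization of the identity $\sum\pm|f_j|^2=0$) such null configurations always force the image into a proper affine subspace; the paper therefore stratifies by the affine span of the image (property (*) and the decomposition \eqref{union}) and, crucially, never parametrizes $F$ at all --- it eliminates $F$ \emph{linearly}, first through the Segre-variety identity and the determinant \eqref{Gdetzero}, and then eliminates the residual unknown coefficient functions $\eta_j(z,w)$ by a second, Wronskian-type determinant in the $\xi$-variables, which requires only the nonvanishing statement of Lemma \ref{lemmachm} rather than any finite determination of the map.

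Two further points. First, your reduction to a Levi-nondegenerate base point via ``real-analytic continuation of $F$ along a path in $M$'' is not justified: holomorphic maps need not continue along $M$. The correct (and the paper's) argument is that the Levi determinant is real-analytic and $\not\equiv0$ on the connected graph $M$, so Levi-nondegenerate points are dense and one can be found already inside the domain $U$ of $F$. Second, your closing dimension count is actually the germ of a correct proof, but of a different and softer one, and it should be run directly rather than as a nontriviality check for a resultant that does not exist: by transversality and the implicit function theorem, the $K$-jet of $\varphi$ at the base point is a rational --- hence semialgebraic --- function of the $K$-jet of $F$ (this direction is untouched by the flexibility above, since the fiber of maps over a fixed $\varphi$ may well be infinite-dimensional), so the $K$-jets of embeddable defining functions lie in a semialgebraic set of dimension $O(K^{n+1})$, eventually smaller than $\dim J^K\sim cK^{2n+1}$; any nonzero polynomial vanishing on its Zariski closure, combined with recentering at each point of $M$ via the Heisenberg translations of $\mathcal{Q}$ to obtain shift-invariance, would give a valid $D$. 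Note, however, that this route is non-constructive: the explicit, algorithmically computable operator of Remark \ref{explicit} and the bounds of Appendix I come from the paper's determinant construction, so your final claim that the dimension gap ``produces the explicit degree bounds established in Appendix I'' overstates what that mechanism yields.
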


%As a corollary, we have,
%\begin{theorem}\label{T1}
%There exists an universal algebraic differential operator $L$ such that  if
%$M \subset \mathbb{C}^{n+1} (n \geq 1)$ is a germ of Levi-nondegenerate hypersurface, locally defined by the complex defining function
%$$\rho(Z,\overline{Z})=0,$$ and
%holomorphically embeddable into the unit sphere $\mathbb{S}^{2N+1}$ in $\mathbb{C}^{N+1}$
%with $N \leq 2n,$ then $L(\rho)=0.$
%\end{theorem}

%The above implies that generic Levi-nondegenerate hypersurfaces in $\mathbb{C}^{n+1}$ are not holomorphically embeddable into the unit sphere $\mathbb{S}^{2N+1}$ in  $\mathbb{C}^{N+1}.$ Furthermore, we have

%\autoref{T1} implies that a small perturbation of a real-analytic hypersurface $M_{0}$ holomorphically embeddable into a hyperquadric $\mathcal{Q}^{2N+1}$ in  $\mathbb{C}^{N+1}$  leads to a hypersurface not embeddable holomorphically  into any hyperquadric in  $\mathbb{C}^{N+1}.$

To the best of our knowledge, \autoref{T1} gives first effective results characterizing  real-analytic hypersurfaces  embeddable into a hyperquadric of higher dimension. We shall also note that a weaker version of the assertion of \autoref{T1} (a differential-algebraic relation for jets of the Segre varieties $\{Q_p\}_{p\in Q_0}$) was proved earlier by Zaitsev in  \cite{Z}.

We now give a series of important remarks.

%\begin{remark} \label{nondeg1} As the proof in Section 4 shows, the low codimension assumption $N\leq 2n$ can be dropped if the embedding map $F:\,M\mapsto\mathcal Q$ in \autoref{T1} is assumed to be {\em finitely nondegenerate} in the sense of Lamel \cite{la1} (see %Section 2.1 for details of the concept).
%\end{remark}

\begin{remark}\label{2n} We remark that, according to \cite{ehz2}, we may drop the transversality requirement in \autoref{T1} in the case $N<2n$, as the latter holds automatically.
\end{remark}

\begin{remark}\label{explicitbound} In Section 3 we show that in the case $n=1,N=2$ the order of the differential-algebraic operator in \autoref{T1} equals to $18$. An explicit bound for the order of  $D(n,N)$ in the general case  can be verified from the Appendix I.
\end{remark}

\begin{remark}\label{explicit}
The proof of \autoref{T1} given in Sections 3 and 4 in fact provides an algorithm for finding the   differential-algabraic operator $D(\varphi)$ {\em explicitly} for any fixed dimensions $n,N$. More precisely, the proof shows that  $D(\varphi)$ is a finite product of explicit determinants involving the {\em PDE defining function} $\{\Phi_{ij}\}_{i,j=1}^n$ of a real hypersurface and its derivatives (see Section 2.2 for details of the concept). In Sections 3 and 4 we also provide an algorithm for recalculating all the relevant derivatives of the PDE defining function $\{\Phi_{ij}\}_{i,j=1}^n$ in terms of derivatives of the initial defining function $\varphi(z,\bar z,u)$.
\end{remark}

By using \autoref{T1}, we obtain the following effective bound for embeddability of real-algebraic hypersurfaces into hyperquadrics.

\begin{theorem} \label{T2}
For any integers $N> n\geq 1$,  there exists  $\mu=\mu(n,N)$ such that a Zariski generic  real-algebraic hypersurface  $M \subset \mathbb{C}^{n+1}$ of any degree $k\geq \mu$  is not transversally holomorphically embeddable  into a hyperquadric $\mathcal{Q}^{2N+1}\subset\mathbb{C}^{N+1}$. An explicit bound for $\mu(n,N)$ is given in \autoref{thebound} below (see Appendix I).
\end{theorem}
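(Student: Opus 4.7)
The plan is to combine \autoref{T1} with an elementary genericity argument on the finite-dimensional parameter space of real polynomial defining functions of bounded degree. Let $D=D(n,N)$ be the universal non-zero shift-invariant differential-algebraic operator provided by \autoref{T1}, and let $\ell=\ell(n,N)$ denote its order. Write $\mathcal{P}_k$ for the real vector space of polynomials $\varphi(z,\bar z,u)$ of total degree at most $k$ satisfying the reality condition $\varphi=\bar\varphi$. Each element of $\mathcal{P}_k$ defines a real-algebraic hypersurface $M_\varphi=\{v=\varphi(z,\bar z,u)\}\subset\CC{n+1}$ of degree at most $k$.

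For any $\varphi\in\mathcal{P}_k$, all derivatives appearing in $D(\varphi)$ are themselves polynomials in $(z,\bar z,u)$, so $D(\varphi)$ is a polynomial in $(z,\bar z,u)$ whose coefficients are polynomial expressions in the coefficients of $\varphi$. Hence the condition $D(\varphi)\equiv 0$ cuts out a real Zariski closed subset $\mathcal{Z}_k\subset\mathcal{P}_k$. Similarly, the requirement that $M_\varphi$ contain at least one Levi-nondegenerate point is equivalent to non-vanishing of a determinant of second derivatives of $\varphi$; this defines a Zariski open subset $\mathcal{U}_k\subset\mathcal{P}_k$. By the contrapositive of \autoref{T1}, every $\varphi\in\mathcal{U}_k\setminus\mathcal{Z}_k$ defines a Levi-nondegenerate real-algebraic hypersurface that admits no transversal holomorphic embedding into any $\mathcal{Q}^{2N+1}\subset\CC{N+1}$. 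Therefore \autoref{T2} is reduced to showing that $\mathcal{U}_k\setminus\mathcal{Z}_k$ is non-empty for every $k\geq\mu(n,N)$; Zariski openness and density in $\mathcal{P}_k$ then follow automatically.

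The next step is to exhibit one witness polynomial $\varphi_\ast$ lying in $\mathcal{U}_\mu\setminus\mathcal{Z}_\mu$. Since $D$ is a non-zero polynomial in the $\ell$-jet variables, there exists an $\ell$-jet $j_0$ on which $D$ does not vanish; this jet is realized at the origin by the Taylor polynomial $\varphi_0$ of degree at most $\ell$. If $M_{\varphi_0}$ fails to be Levi-nondegenerate, set $\varphi_\ast = \varphi_0 + \e\, Q$, where $Q(z,\bar z)=\sum_{i=1}^{n}|z_i|^2$ is the standard positive Hermitian form. Fix a point $x_0$ with $D(\varphi_0)(x_0)\neq 0$; the map $\e\mapsto D(\varphi_0+\e Q)(x_0)$ is a polynomial in $\e$ taking a non-zero value at $\e=0$, hence is non-vanishing for all but finitely many $\e\in\RR$. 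Moreover, with $H_0=\bigl(\partial^2\varphi_0/\partial z_i\partial\bar z_j\bigr)$, the Levi determinant of $\varphi_0+\e Q$ equals $\det(H_0(z,\bar z,u)+\e I_n)$, which as a polynomial in $(z,\bar z,u,\e)$ has leading term $\e^n$ and is in particular not identically zero. Choosing any $\e$ outside both exceptional sets produces a witness of degree at most $\ell(n,N)$.

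The main technical obstacle is converting the abstract bound $\mu(n,N)=\ell(n,N)$ into an \emph{explicit} one; this requires carefully tracking, through the construction of $D(n,N)$ in Sections 3 and 4, both the order of $D$ and the polynomial degree blow-up that arises when its entries are rewritten as derivatives of $\varphi$ rather than of the PDE defining function $\{\Phi_{ij}\}$ (cf.\ \autoref{explicit}). This bookkeeping is precisely the content of Appendix I. Once such an explicit $\mu(n,N)$ is fixed, the conclusion is immediate: for every $k\geq\mu(n,N)$ the Zariski open set $\mathcal{U}_k\setminus\mathcal{Z}_k\subset\mathcal{P}_k$ is non-empty, hence Zariski dense, and every member of it corresponds to a real-algebraic hypersurface with the desired non-embeddability property.
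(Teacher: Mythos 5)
Your overall strategy (contrapositive of \autoref{T1} plus a finite-dimensional genericity argument with a jet-realization witness) is the same as the paper's, but there is a genuine gap: you carry out the genericity argument in the wrong parameter space. You work exclusively in $\mathcal{P}_k$, the space of polynomial graphs $\bigl\{v=\varphi(z,\bar z,u)\bigr\}$ with $\deg\varphi\le k$, and conclude density of good parameters there. But \autoref{T2} concerns a Zariski generic \emph{real-algebraic hypersurface of degree $k$}, i.e.\ the zero set of an arbitrary real polynomial $P$ of degree $\le k$; such a hypersurface is generically \emph{not} a polynomial graph (its local graphing function $\varphi$ is algebraic, not polynomial), and graphs form a thin subfamily of the space of all degree-$k$ hypersurfaces. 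So genericity in $\mathcal{P}_k$ does not imply the statement of the theorem. The paper closes exactly this gap in the second half of its proof: it introduces the family $W_k$ of graphing functions arising from algebraic equations $P(z,a,u,v)=0$ with $\deg P\le k$, observes that $W_k$ is an algebraic family on which the condition $D(\varphi)\equiv 0$ is again algebraic --- hence either holds identically on $W_k$ or only on a proper subvariety --- and then uses the polynomial-graph witnesses (your $\mathcal{P}_k$, the paper's $V_k$, which sit inside $W_k$ via $P=v-\varphi$) to exclude the first alternative. Without this step you have proved a strictly weaker statement.

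There is a second, smaller gap in your witness construction. The operator $D$ is a polynomial on the \emph{complex} jet space, and the Taylor polynomial $\varphi_0$ realizing an arbitrary jet $j_0$ with $D(j_0)\neq 0$ has no reason to satisfy the reality condition; then $\varphi_0\notin\mathcal{P}_\ell$, the set $M_{\varphi_0}$ is not a real hypersurface, and adding $\e Q$ does not repair this. You first need to know that $D$ does not vanish identically on jets of \emph{real} defining functions; this is the ``totally real subspace'' argument which the paper carries out at the end of its proof of \autoref{T1} (a polynomial vanishing on the maximal totally real set of jets of real $\varphi$'s vanishes identically). Granting that, you may take $j_0$ to be the $\ell$-jet of a real function at a point where $D$ is nonzero and truncate, and your argument resumes. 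Finally, your identity ``the Levi determinant of $\varphi_0+\e Q$ equals $\det\bigl(H_0+\e I_n\bigr)$'' is not correct as stated: the Levi determinant of a graph is not the determinant of the complex Hessian of $\varphi$, and the first derivatives $Q_{z_i}=\bar z_i$ contribute at points with $z\neq 0$. The conclusion nevertheless survives by evaluating at points with $z=0$, where the perturbation contributes exactly $\e I_n$ to the Levi form, making the determinant monic of degree $n$ in $\e$. On this last point you are actually more careful than the paper, which leaves the genericity of Levi-nondegeneracy implicit.
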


Thus, \autoref{T2} provides a solution for Problem 2. As  was mentioned above, it also gives the first effective lower bound for the CR-complexity of a Zariski-generic real-algebraic hypersurface in complex space of a fixed degree, thus giving a solution to Problem 3.
\begin{remark}
Note that in the special case $n=1,\,N=2$ we may take $\mu = 18$ for the degree bound, as shown in Section 3.
\end{remark}
%\begin{remark}
%Similarly to \autoref{nondeg1}, we conclude that the low codimension assumption $N\leq 2n$ can be dropped if the embedding map $F:\,M\mapsto\mathcal Q$ in \autoref{T2} is assumed to be {finitely nondegenerate}.
%\end{remark}

%Here we recall the celebrated result of Webster[We] which states that any Levi nondegenerate real algebraic hypersurface can be holomorphically embedded into
%hyperquadrics of suitable dimension and signature.
%Finally, we have the following result, compare to the embeddability theorem of Webster.

The main tool of the paper is the recent dynamical technique in CR-geometry, which shall be addressed as the {\em method of associated differential equations} in the non-singular setting (see Sukhov \cite{sukhov1,sukhov2}), and the {\em CR -- DS technique} in the singular one (see the work of Lamel, Shafikov and the first author \cite{divergence,nonminimalODE,nonanalytic}). An overview of the technique is given in Section 2.2.
%Another important tool in the paper is a recent theorem of Ebenfelt and Shroff \cite{es} on the rigidity of small codimensional CR-mapping of real hypersurfaces into hyperquadrics (see Section 2.1 for details).

\bigskip

\begin{center} \bf Acknowledgements. \end{center}

\smallskip

The authors acknowledge AIM for holding a workshop on Cauchy-Riemann equations in several variables in June 2014, where they started the work. The authors are grateful to Valeri Beloshapka, John D'Angelo, Peter Ebenfelt, and Xiaojun Huang for their interest and helpful discussions, and are particularly grateful to Dmitri Zaitsev for his very valuable comments on the initial version of the manuscript, which allowed to  significantly strengthen the main results of the paper. 

The first author is supported by the Austrian Science Fund (FWF).

\section{Preliminaries}

\subsection{Transversality of CR-embeddings}

\mbox{}

\bigskip

We first recall the notion of transversality. If $U$ is an open subset of $\mathbb{C}^{n+1}, H$ a holomorphic mapping $U \mapsto \mathbb{C}^{N+1},$ and $M'$ a real hypersurface through a point $H(p)$ for some $p \in U,$
then $H$ is said to be transversal to $M'$ at $H(p)$ if
$$T_{H(p)}M' + dH(T_{p}\mathbb{C}^{n+1})=T_{H(p)}\mathbb{C}^{N+1}, $$
where $T_{p}\mathbb{C}^{n+1}$ and $T_{H(p)}M'$ denote the real tangent spaces of $\mathbb{C}^{n+1}$
and $M'$ at $p$ and $H(p),$ respectively. We here mention that in our setting where  there is a real hypersurface $M \subset U$ such that $H(M) \subset M',$  the notion of transversality of a mapping to a hypersurface coincides with that of CR transversality(cf. \cite{ber}). We also recall that $H$ is called CR transversal if $dF(\mathbb CT_pM)$ is not contained in $\mathcal V'_{F(p)}+\overline{\mathcal V'_{F(p)}},$
where $\mathcal V'$ is the CR bundle of $M'.$ Note that a CR mapping is CR transversal at $p \in M$ is equivalent to the nonvanishing of the derivative of its normal component at $p$ along the normal direction(cf. \cite{ber}).

\bigskip

\subsection{Description of the principal method} It was observed by Cartan \cite{cartan,cartanODE} and Segre \cite{segre} (see also Webster \cite{webster}) that the geometry of a real hypersurface in $\CC{2}$ parallels that of a second order ODE
\begin{equation}\label{wzz}
w\rq{}\rq{}=\Phi(z,w,w\rq{}).
\end{equation}
More generally,  the geometry of a real hypersurface in $\CC{n+1},\,n\geq 2$ parallels that of a complete second order system of PDEs
\begin{equation}\label{wzkzl}
w_{z_kz_l}=\Phi_{kl}(z_1,...,z_n,w,w_{z_1},...,w_{z_n}),\quad k,l=1,...,n.
\end{equation}
  Moreover, this parallel becomes algorithmic by using  the Segre family of a real hypersurface. With any real-analytic Levi-nondegenerate hypersurface $M\subset\CC{n+1},\,n\geq 1$ one can uniquely associate an ODE \eqref{wzz} ($n=1$) or a PDE system \eqref{wzkzl} ($n\geq 2$). The Segre family of $M$ plays a role of a mediator between the hypersurface and the associated differential equations.  A modern clear exposition of the method was given in the work \cite{sukhov1,sukhov2} of Sukhov.

  The associated differential equations procedure is particularly clear in the case of a Levi-nondegenerate hypersurface in $\CC{2}$. In this case the Segre family is a 2-parameter anti-holomorphic family of pairwise transverse holomorphic curves. It immediately follows then from the main ODE theorem that there exists a unique ODE \eqref{wzz}, for which the Segre varieties are precisely the graphs of solutions. This ODE is called \it the associated ODE. \rm

Let us provide some details in the general case. We denote the coordinates in $\CC{n+1}$ by $(z,w)=(z_1,...,z_n,w)$. Let $M\subset\CC{n+1}$ be a smooth real-analytic
hypersurface, passing through the origin, and choose a small neighborhood $U$
 of the origin. In this case
we associate a complete second order system of holomorphic PDEs to $M$,
which is uniquely determined by the condition that the differential equations are satisfied by all the
graphing functions $h(z,\zeta) = w(z)$ of the
Segre family $\{Q_\zeta\}_{\zeta\in U}$ of $M$ in a
neighbourhood of the origin.
To be more explicit we consider the
so-called {\em  complex defining
 equation } (see, e.g., \cite{ber})\,
$w=\rho(z,\bar z,\bar w)$ \, of $M$ near the origin, which one
obtains by substituting $u=\frac{1}{2}(w+\bar
w),\,v=\frac{1}{2i}(w-\bar w)$ into the real defining equation and
applying the holomorphic implicit function theorem.
 The Segre
variety $Q_p$ of a point $p=(a,b)\in U,\,a\in\CC{n},\,b\in\CC{}$ is  now given
as the graph
\begin{equation} \label{segredf}w (z)=\rho(z,\bar a,\bar b). \end{equation}
Differentiating \eqref{segredf} once with respect to all variables, we obtain
\begin{equation}\label{segreder} w_{z_j}=\rho_{z_j}(z,\bar a,\bar b),\,j=1,...n. \end{equation}
Considering \eqref{segredf} and \eqref{segreder}  as a holomorphic
system of equations with the unknowns $\bar a,\bar b$, an
application of the implicit function theorem yields holomorphic functions
 $A_1,...,A_n, B$ such that
$$
\bar a_j=A_j(z,w,w'),\,\bar b=B(z,w,w').
$$
The implicit function theorem applies here because the
Jacobian of the system coincides with the Levi determinant of $M$
for $(z,w)\in M$ (\cite{ber}). Differentiating \eqref{segredf} twice
and substituting for $\bar a,\bar b$ finally
yields
\begin{equation}\label{segreder2}
w_{z_kz_l}=\rho_{z_kz_l}(z,A(z,w,w'),B(z,w,w'))=:\Phi_{kl}(z_1,...,z_n,w,w_{z_1},...,w_{z_n}),\,k,l=1,...,n.
\end{equation}
Now \eqref{segreder2} is the desired complete system of holomorphic second order PDEs $\mathcal E = \mathcal{E}(M)$.
 \begin{definition}\label{PDEdef}
 We call the PDE system $\mathcal E = \mathcal{E}(M)$  \it the system of PDEs, associated with $M$. \rm   We also call the collection   $\{\Phi_{ij}\}_{i,j=1}^n$ {\em the PDE defining function} of a Levi-nondegenerate hypersurface $M$.
\end{definition}

For further developments of the associated differential equations method see, e.g., \cite{nonminimalODE},\cite{divergence},\cite{nonanalytic},\cite{analytic} and references therein.

\section{Real-analytic hypersurfaces in $\CC{2}$ embeddable into hyperquadrics in $\CC{3}$}
In this section we  prove \autoref{T1} in the more transparent case when $n=1$ and $N=2$,  so that the source $M\subset\CC{2}$ and the target quadric $\mathcal Q\subset\CC{3}$. In what follows $(z,w)=(x+iy,u+iv)$ denote the coordinates in $\CC{2}$ and  $(Z_1,Z_2,W)$ denote that in $\CC{3}$. We start with the observation that, due to the polynomial nature of differential-algebraic operators, {\em it is sufficient to prove the existence of the desired differential-algebraic operator in a neighborhood of an arbitrary point $p_0\in M$ when $M$ is given by the same defining equation $v=\varphi(z, \overline{z},u)$} (at all other points the identity $D(\varphi\equiv 0$ is satisfied then by analyticity).
We first write a holomorphic embedding map
$$F=(f_1, f_2,  g):\,(M,p_{0})\mapsto (\mathcal Q, F(p_{0}))$$
for some $p_{0} \in M.$ Assume $F$ is holomorphic in a small neigborhood $U$ of $p_0$ in $\mathbb{C}^2.$ Shifting the base point $p_0$, we may assume $M$ to be Levi-nondegenerate at $p_0$. We split our arguments into two cases: 

{\bf Case I:}~The image of $U$ under $F$ is contained in some affine linear subspace of $\mathbb{C}^3$ and thus maps $M$ into a hyperquadric in $\CC{2}$ (in which case $M$ is biholomorphic to the sphere $S^3\subset\CC{2}$);

{\bf Case II:}~The image of $U$ under $F$ is not contained in any affine linear subspace of $\mathbb{C}^3.$

\bigskip

The case I of a spherical hypersurface $M$ is considered later separately, so that we assume now to be under the setting of Case II.  

By changing the base point and shifting the coordinates, we may assume that $p_0\in M$ is the origin, and $M$ is Levi-nondegenerate at $0$. 
Let us write the target quadric $\mathcal Q$ in the form
$$\im W=Z_1\overline{Z_1}\pm Z_2\overline{Z_2}.$$
After a change of coordinates in  $\mathbb{C}^{3}$ preserving the quadric we may assume $F(0)=0$.

%\begin{equation}\label{normalmap}
%\frac{\partial g}{\partial w}(0)=1, \quad  \frac{\partial f_1}{\partial w}(0)=\frac{\partial f_2}{\partial w}(0)=0
%\end{equation}
%(see, e.g., \cite{bh}).
%Considering now  the CR vector field
%$$L:=\frac{\partial}{\partial z} + \frac{\partial \rho(z,\overline{z}, \overline{w})}{\partial z}\frac{\partial}{\partial w}$$ along $M$ near $0,$ we obtain due to the $2$-nondegeneracy of $F$:
%\begin{equation}\label{nondeg}
%\mathrm{det}\left(
             %\begin{array}{cccc}
               %0 & 0 & \frac{1}{2i} \\
                %Lf_1(0) & Lf_2(0) &  0 \\
                %L^2f_1(0) & L^2 f_2(0) &  0                                   \\
                              %\end{array}
           % \right) \neq 0.
%\end{equation}

Let us then consider the Segre family $\{S_p\}_{p\in U}$ of $M$ ($U\subset\CC{2}$ is a neighborhood of the origin). In view of $F(M)\subset\mathcal Q$, any Segre variety $S_{p}$ of  a point $p=(a,b)\in U$, considered as a graph $w= w(z)= \rho(z, \bar a, \bar b)$, is contained
in the Segre variety of $F(p)=(A,B,C)$. Thus
we have,
\begin{equation}\label{segre}
\frac{g-\overline{C}}{2i}=f_1\overline{A}\pm f_2\overline{B}|_{w=w(z)}.
\end{equation}
We now differentiate \eqref{segre} three times with respect to $z$ and write the result in terms of the $3$-jet
$$(z,w,w',w'',w''')$$
of a Segre variety $S_p$ at a point $(z,w)\in S_p$. Note that each differentiation amounts to applying the vector field
$$\mathcal{L}:=\frac{\partial}{\partial z} + w' \frac{\partial}{\partial w}$$
with the rule $\frac{\partial}{\partial z}w^{(j)}=w^{(j+1)},\,\frac{\partial}{\partial w}w^{(j)}=0$. Performing the differentiation $3$ times, we get:
\begin{equation}\label{segre1}
 (\mathcal{L}f_1)\overline{A}\pm (\mathcal{L}f_2)\overline{B}-\frac{1}{2i}(\mathcal{L} g)=0.
\end{equation}

\begin{equation}\label{segre2}
 (\mathcal{L}^2f_1)\overline{A}\pm (\mathcal{L}^2f_2)\overline{B}-\frac{1}{2i}(\mathcal{L}^2 g)=0.
\end{equation}

\begin{equation}\label{segre3}
 (\mathcal{L}^3f_1)\overline{A}\pm (\mathcal{L}^3f_2)\overline{B}-\frac{1}{2i}(\mathcal{L}^3 g)=0.
\end{equation}

Considering \eqref{segre1}-\eqref{segre3} as a linear system for the unknowns $(\overline{A}, \pm\overline{B},  -\frac{1}{2i}),$
we  conclude that it has a non-zero solution, thus its determinant is $0.$ That is,

\begin{equation}\label{detzero}
\mathrm{det}\left(
\begin{array}{llll}
\mathcal{L}f_1 &   \mathcal{L}f_{2} & \mathcal{L}g \\
\mathcal{L}^2f_1 &  \mathcal{L}^2f_{2} & \mathcal{L}^2g \\
\mathcal{L}^3f_1 &  \mathcal{L}^3f_{2} & \mathcal{L}^3g\\
\end{array}
\right)\equiv 0.
\end{equation}

Note that
$$\mathcal{L}h = \frac{\partial h}{\partial z}+ w'\frac{\partial h}{\partial w};$$

$$\mathcal{L}^2 h= \frac{\partial^2 h}{\partial z^2}+ 2w' \frac{\partial^2 h}{\partial z \partial w} +
(w')^2\frac{\partial^2 h}{\partial w^2} + w'' \frac{\partial h}{\partial w};$$

$$\mathcal{L}^3 h= \frac{\partial^3 h}{\partial z^3} + 3w'' \frac{\partial^2 h}{\partial z \partial w} + 3w' \frac{\partial^3 h}{\partial z^2 \partial w}
+ 3w''w' \frac{\partial^2 h}{\partial w^2} + 3(w')^2 \frac{\partial^3 h}{\partial z\partial w^2} + (w')^3 \frac{\partial^3 h}{\partial w^3} +
 w'''\frac{\partial h}{\partial w}.$$
%$$\mathcal{L}^4 h= \frac{\partial^4 h}{\partial z^4}+ 4 w'\frac{\partial^3 h}{\partial z^2 \partial w} + 6 (w')^2 \frac{\partial^4 h}{\partial z^2 \partial w^2}+
%6 w'' \frac{\partial^3 h}{\partial z^2 \partial w}+ 12w'w'' \frac{\partial^3 h }{\partial z \partial w^2} + 4w''' \frac{\partial^2 h}{\partial z \partial w}$$

%$$+ 4(w')^3 \frac{\partial^4 h}{\partial z \partial w^3}+ 4(w')^2w'' \frac{\partial^3 h}{\partial y^3}+ (w')^4\frac{\partial^4  h}{\partial w^4} +
%3 (w'')^2 \frac{\partial^2 h}{\partial w^2}+ 4 w' w''' \frac{\partial^2 h}{\partial w^2}+ w^{(4)}\frac{\partial h}{\partial w}.$$

Hence \eqref{detzero} reads as

\begin{equation}\label{3order}
w'''P(z,w, w') +R(z,w, w',w'')=0,
\end{equation}
where $P,R$ are polynomials of the form
\begin{equation}\label{PQ}
\begin{aligned}
P=&\chi_{0}+ \chi_{1}w' + \chi_{2} (w')^2,\quad
R= \Bigl(\chi_3+\chi_4w'+\chi_5(w')^2+\chi_6(w')^3+\chi_7(w')^4+\\
+&\chi_8(w')^5
+\chi_9(w')^6\Bigr)
+ \Bigl(\chi_{10}+\chi_{11}w'+\chi_{12}(w')^2+\chi_{13}(w')^3\Bigr)w''+\Bigl(\chi_{14}+\chi_{15}w'\Bigr)(w'')^2.
\end{aligned}
\end{equation}
Here all $\chi_j=\chi_j(z,w)$. We prove the following lemma on $\chi_j.$

\begin{lemma}\label{lemmachi}
The functions $\chi_j, 0 \leq j \leq 15,$ do not all vanish identically.
\end{lemma}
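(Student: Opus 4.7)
The plan is to argue by contradiction using only the three coefficients $\chi_0,\chi_1,\chi_2$ that define the polynomial $P(z,w,w')$. Suppose to the contrary that every $\chi_j$ vanishes identically; in particular, $P\equiv 0$.

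First, I would compute $P$ in a transparent form. Writing $F=(f_1,f_2,g)$ as a row vector and noting that $w'''$ enters $\mathcal{L}^3 h$ only through the summand $w'''h_w$, the coefficient of $w'''$ in the determinant \eqref{detzero} equals
$$P=\det\bigl(\mathcal{L}F,\ \mathcal{L}^2 F,\ F_w\bigr).$$
Now $w''$ enters $\mathcal{L}^2 h$ only through $w'' h_w$, yielding a row proportional to $F_w$, and $w'$ enters $\mathcal{L} h$ only through $w' h_w$. Multilinearity of the determinant combined with the vanishing of a determinant with two equal rows collapses both contributions, giving
$$P=\det\bigl(F_z,\ F_{zz}+2w' F_{zw}+(w')^2 F_{ww},\ F_w\bigr).$$
Expanding the middle row by multilinearity in $w'$ identifies
$$\chi_0=\det(F_z,F_{zz},F_w),\quad \chi_1=2\det(F_z,F_{zw},F_w),\quad \chi_2=\det(F_z,F_{ww},F_w).$$

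Since $F$ is a holomorphic embedding, $F_z(z,w)$ and $F_w(z,w)$ are $\mathbb{C}$-linearly independent throughout $U$ (shrinking $U$ if necessary). Hence the simultaneous vanishing of $\chi_0,\chi_1,\chi_2$ forces the three second-order derivatives $F_{zz}, F_{zw}, F_{ww}$ each to lie in $\mathrm{span}_{\mathbb{C}}(F_z,F_w)$ at every point of $U$.

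Finally, set $\omega:=F_z\wedge F_w\in\Lambda^2\CC{3}$, which is nowhere zero. Writing $F_{zz},F_{zw},F_{ww}$ as $\mathbb{C}$-linear combinations of $F_z,F_w$, one verifies directly that $\partial_z\omega=\lambda\omega$ and $\partial_w\omega=\mu\omega$ for some holomorphic functions $\lambda,\mu$ on $U$. Consequently $\omega$ is a nowhere-zero scalar multiple of a fixed constant bivector $\omega_0\in\Lambda^2\CC{3}$, so the tangent plane $\mathrm{span}(F_z,F_w)$ of the holomorphic surface $F(U)\subset\CC{3}$ is independent of $(z,w)$. Thus $F(U)$ lies in a single complex affine $2$-plane of $\CC{3}$, contradicting the assumption of Case II. The only delicate step in the plan is the bookkeeping that identifies $\chi_0,\chi_1,\chi_2$ as the three $3\times 3$ determinants above; once those are in hand, the Frobenius-style conclusion is immediate.
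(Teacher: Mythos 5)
Your proof is correct, and it takes a genuinely different route from the paper's. The paper assumes all sixteen $\chi_j$ vanish, restricts the resulting identity to an arbitrary polynomial curve $\Gamma=\{w=h(z)\}$ through the origin, applies the classical Wronskian criterion for linear dependence (together with the normalization $F(0)=0$) to conclude that the components of $F|_\Gamma$ are linearly dependent, and then contradicts Case II by interpolating such a curve through three points whose images span $\mathbb{C}^3$. You instead use only the vanishing of $\chi_0,\chi_1,\chi_2$, i.e.\ of the coefficient $P$ of $w'''$: your identifications $\chi_0=\det(F_z,F_{zz},F_w)$, $\chi_1=2\det(F_z,F_{zw},F_w)$, $\chi_2=\det(F_z,F_{ww},F_w)$ are correct (the $w'F_w$ and $w''F_w$ contributions die against the row $F_w$), the embedding hypothesis does force $F_z,F_w$ to be pointwise independent, and the bivector computation $\partial_z\omega=\lambda\omega$, $\partial_w\omega=\mu\omega$ (with integrability $\lambda_w=\mu_z$ following from $\omega\neq 0$) legitimately yields a constant tangent plane, hence containment of $F(U)$ in an affine $2$-plane, contradicting Case II. Your route is more elementary and proves a strictly stronger statement -- already $P\not\equiv 0$, not merely that the sixteen coefficients do not all vanish -- and it needs neither Wronskian theory, nor an interpolating curve, nor the normalization $F(0)=0$. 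What the paper's route buys is scalability: the curve-restriction/Wronskian argument is exactly the one that generalizes in Section 4 (Lemma \ref{lemmachm}, via Proposition \ref{prop42} and Wolsson's generalized Wronskians) to arbitrary $n<m$, where the rows of the determinant involve derivatives $\mathcal{L}^{\alpha^i}F$ of all orders up to $m-n+2$ and one cannot isolate a single top-order coefficient and read off second-derivative containment as cleanly as in your $3\times 3$ situation.
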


\begin{proof}
 Assume, otherwise, that  all $\chi_j, 0 \leq j \leq 15,$ are identical zeroes.  Then the left hand sides of (\ref{detzero}) and (\ref{3order}) are identically zero when $w', w'', w'''$ are regarded as independent variables:
\begin{equation}\label{eqnprt}
P(z,w, \eta_1)=0, \quad Q(z,w, \eta_1, \eta_2)=0
\end{equation}
for any $\eta_1, \eta_2 \in \mathbb{C}.$ 

We claim that \eqref{eqnprt} implies the following: for any (fixed) polynomial complex curve $$\Gamma=\bigl\{(z,w)\in\CC{2}:\,\,  w=h(z)\bigr\}$$  passing through the origin, the components of the map $F|_\Gamma$ are linearly dependent. Indeed, let us write $$\Lambda:=\frac{\partial}{\partial z}+h'(z) \frac{\partial}{\partial w}$$ as the holomorphic tangent vector of $\Gamma.$ (Note that applying $\Lambda$ to a holomorphic function amounts to differentiating it along $\Gamma$).  By the above observation (\ref{eqnprt}), we conclude that
\begin{equation}
\mathrm{det}\left(
\begin{array}{llll}
\Lambda f_1 &   \Lambda f_{2} & \Lambda g \\
\Lambda^2 f_1 &  \Lambda^2 f_{2} & \Lambda^2 g \\
\Lambda^3 f_1 &  \Lambda^3 f_{2} & \Lambda^3 g\\
\end{array}
\right)=h'''P(z,w, h') +R(z,w, h',h'')=0~\text{on}~\Gamma.
\end{equation}
By the classical property of Wronskian, we conclude that $$\lambda_1 \Lambda f_1+ \lambda_2 \Lambda f_2 +\lambda_3 \Lambda g|_\Gamma\equiv 0$$  for some complex numbers $\lambda_i, 1 \leq i \leq 3,$ that are not all zero.  Furthermore, the assumption $F(0)=0$ yields $\lambda_1 f_1+ \lambda_2 f_2 +\lambda_3 g=0$ on $\Gamma.$ Thus  the components of the map $F|_\Gamma$ are linearly dependent, and this proves the claim.

By the assumption of Case II, there exist three distinct points $p_i=(a_i, b_i), 1 \leq i \leq 3,$ near $0$ such that
\begin{equation}\label{eqnpide}
\mathrm{Span}_{\mathbb{C}}\{ F(p_1), F(p_2), F(p_3)\}=\mathbb{C}^3.
\end{equation} Perturbing $p_i$ if necessary, we can assume $a_i \neq a_j$ for $i \neq j$ and $a_i \neq 0$ for each $i$. We then choose a holomorphic polynomial $h_0(z)$ such that $h_0(a_i)=b_i, 1 \leq i \leq 3,$ and $h_0(0)=0.$ Hence the origin and $p_i$
are all on the complex curve $\Gamma_0$ defined by $w=h_0(z).$ Now the assertion of the above the  claim applied for $\Gamma_0$ gives a contradiction to (\ref{eqnpide}). This establishes the lemma.
\end{proof}

%We claim that the polynomial $P$ does not vanish at the $2$-jet of of the Segre variety $S_0$ (in particularly, this implies that the polynomial is non identical zero). Indeed, it is easy to see from \eqref{detzero} that
%\begin{equation}\label{Pequals}
%P=\mathrm{det}\left(
%\begin{array}{llll}
%\mathcal{L}f_1 &  \mathcal{L}f_{2} & \mathcal{L} g\\
%\mathcal{L}^2 f_1  & \mathcal{L}^2 f_{2} & \mathcal{L}^2 g\\
%(f_1)_{w}  & (f_{2})_{w} & g_{w}
%\end{array}
%\right)
%\end{equation}
%At the same time, we note that the operators $L|_{\bar z=\bar w=0}$ and $\mathcal L$ are coincident when applied along the  Segre variety $S_0$.
%Thus (in view of \eqref{normalmap}) the determinant \eqref{nondeg} coincides, up to a nonzero factor, with $P$ evaluated at the $2$-jet of $S_0$, and this proves the claim.

On the other hand, $M$ is Levi-nondegenerate at $0$ and thus its  Segre family satisfies a second order ODE
\begin{equation}\label{ODE}
w''= \Phi(z,w,w')
\end{equation}
for a holomorphic near a point $(0,0,\xi_0)$ function $\Phi$.
We now consider  the $3$-jet space $J^3(\CC{},\CC{})$ with the coordinates $(z,w,\xi,\eta,\zeta)$ (where $\xi,\eta,\zeta$ correspond to $w',w'',w'''$ respectively) and treat the ODEs \eqref{3order},\eqref{ODE} as respectively submanifolds $\mathcal M,\mathcal E$  in $J^3(\CC{},\CC{})$.
Then $\mathcal M$ looks as
\begin{equation}\label{manifold1}
 P(z,w,\xi)\zeta+R(z,w,\xi,\eta)=0
\end{equation}
and $\mathcal E$ as
\begin{equation}\label{manifold2}
\eta=\Phi(z,w,\xi), \quad \zeta=\Phi_{z}+ \Phi_{w}\xi+ \Phi_{\xi}\eta.
\end{equation}
Now the fact that each graph of a  solution of \eqref{ODE} is contained in that of \eqref{3order} implies
$$\mathcal E\subset\mathcal M,$$
so that
\begin{equation}\label{condition}
 \Bigl(\Phi_{z}+ \Phi_{w}\xi+ \Phi_{\xi}\Phi\Bigr)P(z,w,\xi)+R(z,w,\xi,\Phi)=0
\end{equation}
(where $\Phi=\Phi(z,w,\xi)$).

Substituting \eqref{PQ} into \eqref{condition}, we obtain a scalar {\em linear} equation for the functions $\chi_0(z,w),...,\chi_{15}(z,w)$ with coefficients depending on $z,w,\xi$. Differentiating this equation $15$ time with respect to the variable $\xi$, we obtain $15$ new identities each of which is a similar scalar linear equation for the functions $\chi_0(z,w),...,\chi_{15}(z,w)$. In view of the fact that Lemma \ref{lemmachi}, i.e., not all $\chi_j$ vanish identically, so that the determinant
$$\mathcal D\Bigl(\Phi(z,w,\xi)\Bigr)$$
of the corresponding $16\times 16$ linear system vanishes identically. Note that this determinant is nothing but  {\it a universal differential-algebraic polynomial $\mathcal D$ of order $16$ applied to the function $\Phi$}. Note that $\mathcal D$ is invariant under shifts in $z,w$.
Thus, the embeddability of the hypersurface $M$ into $\mathcal Q$ implies
\begin{equation}\label{DPhi}
\mathcal D(\Phi)\equiv 0.
\end{equation}

 We finally consider Case I, in which $M$ is spherical. In the latter case, the Segre family of $M$ is locally biholomorphic to the family of straight lines in $\CC{2}$, and hence (see Tresse \cite{tresse}) $\Phi(z,w,\xi)$ is cubic in the argument $\xi$ (this could be proved by arguments similar to the ones above, but we will not provide here the proof of this classical fact). Now let us write, for the function $\Phi$ under consideration, the above determinant $\mathcal D(\Phi)$. In view of the fact that $\Phi$ is cubic in $\xi$, the derivatives of the first row of order $\geq 8$ vanish identically, so that we conclude that $\mathcal D(\Phi)\equiv 0$  in this special case as well.

We shall note that in the latter, equidimensional, case an differential-algebraic operator characterizing the sphericity of a Levi-nondegenerate hypersurface $M\subset\CC{2}$ was obtained in the work \cite{merker} of Merker.

We now need
\begin{proposition}\label{Dnotzero}
The differential-algebraic operator $\mathcal D$ is not identical zero.
\end{proposition}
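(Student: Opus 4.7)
The plan is to proceed by contradiction. Suppose $\mathcal D \equiv 0$ as a differential-algebraic operator. Then for \emph{every} holomorphic function $\Phi(z,w,\xi)$ on a neighborhood of a point, the $16\times 16$ linear system in the unknowns $\chi_0,\dots,\chi_{15}$ obtained by differentiating (\ref{condition}) fifteen times in $\xi$ would admit a non-trivial solution. Applied to the PDE defining function $\Phi = \Phi_M$ of any Levi-nondegenerate real-analytic hypersurface $M \subset \mathbb{C}^2$, this yields polynomials $P, R$ of the form (\ref{PQ}), not both identically zero, satisfying $(\Phi_z + \Phi_w \xi + \Phi_\xi \Phi)P + R(z,w,\xi,\Phi) \equiv 0$.

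The next step is to invert the construction: starting from this identity, I would produce in a neighborhood of a Levi-nondegenerate point $p_0 \in M$ a holomorphic triple $F = (f_1, f_2, g)$ sending $M$ into some hyperquadric $\mathcal Q \subset \mathbb{C}^3$. Concretely, the vanishing of the Wronskian-like determinant (\ref{detzero}) along all Segre varieties is equivalent, via the classical theory of linear dependence of holomorphic functions (applied exactly as in the proof of \autoref{lemmachi}), to the existence of a non-trivial combination $\lambda_1 f_1 + \lambda_2 f_2 + \lambda_3 g \equiv 0$ along each Segre variety. Integrating the compatible system of linear dependency relations across the $2$-parameter Segre family, together with the normalization $F(p_0) = 0$, would produce the desired embedding and contradict the existence, established by Forstneri\v{c} \cite{For,For1} and explicitly by Zaitsev \cite{Z}, of Levi-nondegenerate real-analytic hypersurfaces $M \subset \mathbb{C}^2$ that are not transversally holomorphically embeddable into any hyperquadric in $\mathbb{C}^3$.

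The main obstacle in this approach is the reverse-construction step: the identity (\ref{detzero}) only encodes a single Wronskian relation, whereas building a map into a \emph{fixed} hyperquadric $\mathcal Q$ requires coherent patching of the linear dependency coefficients $\lambda_i$ across the entire Segre family, respecting the specific quadratic shape of the defining equation (\ref{segre}). A cleaner alternative, which bypasses the reverse-construction issue altogether, is to verify $\mathcal D \not\equiv 0$ by direct evaluation: since $\mathcal D(\Phi)$ is a polynomial in finitely many jet coordinates of $\Phi$, it suffices to exhibit a single holomorphic $\Phi_0(z,w,\xi)$ at which the associated $16\times 16$ matrix has non-vanishing determinant. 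A generic polynomial $\Phi_0$ of degree $\leq 15$ in $\xi$ with rationally independent coefficients serves this purpose, and the resulting finite determinant can be checked without appeal to any embedding theory. This second route will almost certainly be the one I would adopt in writing the final proof, reserving the contradiction argument as conceptual motivation.
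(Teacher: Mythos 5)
Your ``cleaner alternative'' is, in its reduction step, exactly the paper's strategy: since $\mathcal D$ is a polynomial in finitely many jet coordinates, $\mathcal D\not\equiv 0$ as an operator if and only if there is a single witness $\Phi_0$ with $\mathcal D(\Phi_0)\not\equiv 0$, and the paper, like you, looks for a witness depending on $\xi$ alone, in which case $\mathcal D(\Phi_0)$ is the Wronskian of the sixteen functions in \eqref{row}. (Your first, contradiction-based route is indeed unsalvageable, for the reason you yourself identify: the paper only proves the implication embeddable $\Rightarrow \mathcal D(\Phi)=0$, and there is no way to reverse-engineer a map into a hyperquadric from the vanishing of one determinant along Segre varieties; note the paper's proof never invokes any non-embeddability results.) The genuine gap is that you never actually produce the witness. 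The assertion that ``a generic polynomial of degree $\leq 15$ in $\xi$ with rationally independent coefficients serves this purpose'' is precisely the content of \autoref{Dnotzero}, and as stated it is both unproven and circular: to say ``generic polynomials work'' you must first know that the locus of polynomials $\Phi$ for which the entries of \eqref{row} are linearly dependent is a \emph{proper} algebraic subvariety of the coefficient space, and that is logically equivalent to exhibiting one $\Phi_0$ outside it --- the very thing to be proved.

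The claim is also delicate, and your safeguard is the wrong one. The dependence to be excluded is a relation $A\,\Phi_\xi\Phi + B + C\,\Phi + E\,\Phi^2=0$ over $\mathbb{C}$ with $\deg A\leq 2$, $\deg B\leq 6$, $\deg C\leq 3$, $\deg E\leq 1$ in $\xi$ (cf. \eqref{PQ}, \eqref{row}), and innocent-looking polynomials do satisfy such relations: $\Phi=\xi^{15}$ gives $\xi^2\Phi_\xi\Phi=15\,\xi\Phi^2$; $\Phi=\xi^{15}+1$ gives $\xi\Phi_\xi\Phi=15\Phi^2-15\Phi$; $\Phi=\xi^{15}+\xi$ gives $\xi^2\Phi_\xi\Phi=15\,\xi\Phi^2-14\,\xi^2\Phi$; so $\mathcal D$ annihilates all of these. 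Rational independence of the coefficients of $\Phi_0$ does not exclude such relations: the dependence coefficients range over $\mathbb{C}$, and the bad locus is cut out by integer polynomials in the coefficients of $\Phi_0$, so what would suffice is algebraic independence over $\mathbb{Q}$ --- and only \emph{after} properness of the bad locus is established. The paper closes exactly this gap by a transcendence argument: it chooses $\Phi=\Phi(\xi)$ satisfying no nontrivial algebraic differential equation (such functions exist by Ostrowski \cite{ostrowski}); since the sixteen entries of \eqref{row} are pairwise distinct monomials in $(\xi,\Phi,\Phi_\xi)$, any $\mathbb{C}$-linear dependence among them would be a nontrivial first-order algebraic ODE for $\Phi$, which is impossible, and linear independence of analytic functions then forces the Wronskian $\mathcal D(\Phi)$ to be $\not\equiv 0$. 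If you insist on a polynomial witness, one can be verified by hand --- e.g.\ for $\Phi_0=\xi^{15}+\xi^8$ any relation as above has $\Phi_0 \mid B$, hence $B=0$ by degree reasons, hence $A(\Phi_0)_\xi+C+E\Phi_0=0$, and comparing coefficients in degrees $16,15,14,9,8$ and $0,\dots,3$ forces $A=C=E=0$ --- but some such verification, or the transcendence argument, is what your write-up is missing.
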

\begin{proof}
We claim that there exists a function $\Phi$ of the form $\Phi=\Phi(\xi)$ such that $\mathcal D(\Phi)$ does not vanish identically. Indeed, substituting $\Phi=\Phi(\xi)$ into \eqref{condition} and differentiating $15$ times in $\xi$, we obtain a $16\times 16$ determinant which, in turn, is {\it the Wronskian of the system of functions in the first row}. This first row has the form
\begin{equation}\label{row}
\Bigl(\Phi_\xi\Phi,\xi\Phi_\xi\Phi,\xi^2\Phi_\xi\Phi,1,\xi,\xi^2,...,\xi^6, \Phi,\xi\Phi,\xi^2\Phi,\xi^3\Phi,
\Phi^2,\xi\Phi^2\Bigr).
\end{equation}
We then choose an analytic $\Phi(\xi)$ in such a way that the collection of functions in \eqref{row} is linearly independent (this is possible since every linear dependence between the components of \eqref{row} implies a nontrivial  algebraic or differential equation for $\Phi$). Then the Wronskian $\mathcal D(\Phi)$ does not vanish, and this proves the claim and the proposition.

\end{proof}

Write $w=\rho(z,\overline{z},\overline{w})$ as the complex defining function of $M.$ We now aim to express the condition $\mathcal D\Bigl(\Phi(z,w,\xi)\Bigr)=0$ in the form
$$D^C\Bigl(\rho(z,a,b)\Bigr)=0$$ for some  differential-algebraic operator $D^C$ of order $18$. Indeed, in view of the
Levi-nondegeneracy of $M$ near $0$ the map
$$(z,a,b)\mapsto \Bigl(z,\rho(z,a,b),\rho_{z}(z,a,b)\Bigr)$$
is a local biholomorphism between $\CC{3}\,(z,a,b)$ and $\CC{3}\,(z,w,\xi)$. If
$$(z,w,\xi)\mapsto \Bigl(z,A(z,w,\xi),B(z,w,\xi)\Bigr)$$ is the inverse biholomorphism,
then (compare with the associated differential equation procedure described in section 2) we have
$$w=\rho(z,a,b),\quad \xi=\rho_z(z,a,b),\quad \Phi(z,w,\xi)=\rho_{zz}(z,a,b),$$
where $a=A(z,w,\xi),\, b=B(z,w,\xi)$. Thus $\Phi$ is already expressed in terms of $\rho$. Let us then demonstrate, for example, how we express $\Phi_w(z,w,\xi)$. We have:
\begin{equation}\label{Phiw}
\Phi_w=\rho_{zza}A_w+\rho_{zzb}B_w.
\end{equation}
We now need to compute $A_w,B_w$ in terms of $\rho$. For that, we differentiate the identities
$$w=\rho(z,A(z,w,\xi), B(z,w,\xi)),\quad \xi=\rho_z(z,A(z,w,\xi), B(z,w,\xi))$$
in $w$ and get:
\begin{equation}\label{systemAB}
1=\rho_aA_w+\rho_bB_w,\quad 0=\rho_{az}A_w+\rho_{bz}B_w.
\end{equation}
Note that \eqref{systemAB} is a linear system for $A_w,B_w$ with the determinant at the reference point $(z,w,\xi)=(0,0,\xi_0)$ being equal to the Levi determinant of $M$ at the origin. Thus this determinant is non-zero and, applying the Cramer rule, we find $A_w,B_w$ as rational functions of the $2$-jet of $\rho$. Substituting into \eqref{Phiw}, we then find $\Phi_w$ as a rational function of the $3$-jet of $\rho$.

We then similarly express the entire $16$-jet of $\Phi$ as a rational (vector-valued) function of the $18$-jet of $\rho$. Thus the condition   $\mathcal D\Bigl(\Phi(z,w,\xi)\Bigr)\equiv 0$ reads as $D^C\Bigl(\rho(z,a,b)\Bigr)=0$ for some order $18$ differential-algebraic operator $D^C$, as required. Note that $D^{\CC{}}$ is invariant under shifts in $z,a,b$. Also note that the differential-algebraic operator $D^C$ is not identical zero. Indeed, by \autoref{Dnotzero}, there is a $\Phi(z,w,w')$ with $\mathcal D(\Phi)\not\equiv 0$, and for such $\Phi(z,w,\xi)$ we find $\rho(z,a,b)$ with $D^{\CC{}}(\rho)=\mathcal D(\Phi)\not\equiv 0$ by solving the ODE $w''=\Phi(z,w,w')$ with the initial data $w(0)=a, w'(0)=b.$

Now it is not difficult to complete the proof or \autoref{T1} in the case of CR-dimension $1$.

\begin{proof}[Proof of \autoref{T1} for $n=1,N=2$]
\mbox{}

Recall that the complex defining function $\rho(z,a,b)$ and the real defining function $\varphi(z,a,u)$ are connected  via the identities:
$$\rho(z,a,b)=u+i\varphi(z,a,u), \quad b=u-i\varphi(z,a,u).$$
Then, for example, the derivative $\rho_b$ is expressed via the $1$-jet of $\varphi$ as follows: we have
$$\rho_b=u_b+iu_b\varphi_u,\quad 1=u_b-iu_b\varphi_u,$$
so that
$$\rho_b=(1+i\varphi_u)/ (1-i\varphi_u).$$
We similarly expressed the entire $18$-jet of $\rho$ as a rational (vector-valued) function of the $18$-jet of $\varphi$. Thus the condition   $D^C\Bigl(\rho(z,w,\xi)\Bigr)\equiv 0$ reads as $D\Bigl(\varphi(z,a,b)\Bigr)=0$ for some order $18$ differential-algebraic operator $D$.

It remains to show that the operator $D$ is non-trivial (on the space of real-analytic defining functions $\varphi$ of real hypersurfaces). For that we note that $D$ is identical zero if and only if it is identical zero on the subspace of $\varphi$ defining a real hypersurface (since  the latter subspace is totally real). However, $D(\varphi)$ is not identical zero since $D^C(\rho)$ is not identical zero, as was shown above. This proves the theorem for $n=1,N=2$.

\end{proof}

\begin{proof}[Proof of \autoref{T2} for $n=1,N=2$]
Let us denote by $V_k$ the space of  polynomials $\varphi(z,a,u)$  of degree  $\leq k$ for some $k\geq 18$. We claim that there exists $\varphi\in V_k$ such that $D(\varphi)$ does not vanish identically. Indeed, the identity $D(\varphi)=0$ defines a proper algebraic variety $A$ in the jet bundle $J^{18}(\CC{3},\CC{})$ (the properness follows from the non-triviality of $D$). Picking a point $q\in J^{18}(\CC{3},\CC{})\setminus A$ we choose the unique polynomial $\psi\in V_k$ of degree $18$ with the $18$-jet corresponding to $q$, and get $D(\varphi)\not\equiv 0$, as required. Thus $D$ is generically non-vanishing on $V_k$.

If we now consider the set $W_k$ of $\varphi(z,a,u)$ arising from an algebraic equation $P(z,a,u,v)=0$ for a polynomial $P$ of degree $\leq k$, then $W_k$ has a structure of algebraic manifold. Hence either $D$ vanishes identically on $W_k$, or is (Zariski) generically non-vanishing. By the above argument, we conclude that $D$  is (Zariski) generically non-vanishing on $W_k$, and this implies the claim of the theorem for $n=1,N=2$.

\end{proof}

\section{The high dimensional case}
In this section  \autoref{T1} and \autoref{T2} will be established in the general case. 

For a fixed $n\geq 1$, we set $\mathcal{M}_{m},\, m \geq n,$ to be the set of all Levi-nondegenerate
hypersurfaces in $\mathbb{C}^{n+1}$ that can be locally tranversally holomorphically embedded into a hyperquadric $\mathcal{Q}^{2m+1} \subset \mathbb{C}^{m+1}.$ We also write $\tilde{\mathcal{M}}_{m}
\subset \mathcal{M}_{m}$ to be the collection of Levi-nondegenerate hypersurfaces $M$ in $\mathbb{C}^{n+1}$ satisfying the following property (*) (for a fixed $m$):

\bigskip

{\bf Property (*)}~{\em There exist a point $p \in M$ and a holomorphic map $F$ from a small neighborhood $U$ of $p$ to $\mathbb{C}^{m+1}$  such that $M$ is locally transversally holomorphically embedded into $\mathcal{Q}^{2m+1} \subset \mathbb{C}^{m+1}$ by $F.$ Moreover,
the image of $U$ under $F$ is not contained in any affine linear subspace of $\mathbb{C}^{m+1}.$}

\bigskip

Note that when $m=n,$ the assumption in Property (*) that the image of $U$  is not contained in any affine linear subspace of $\mathbb{C}^{m+1}$ can be dropped, as it follows already from the transversality assumption.  We obviously have

\begin{equation}\label{union}
\bigcup_{m=n}^N \tilde{\mathcal{M}}_{m}= \mathcal{M}_{N}.
\end{equation}

We  prove in this section the following two theorems implying \autoref{T1} and \autoref{T2} respectively.
\begin{theorem}\label{T4}
For any integers $m \geq n\geq 1,$  there exists an universal non-zero shift-invariant differential-algebraic operator $D=D(n,m)$ such that the following holds. If
a germ of real-analytic hypersurface $M \subset \mathbb{C}^{n+1}$ with a defining equation
$$v=\varphi(z, \overline{z},u)$$ is contained in $\tilde{\mathcal{M}}_{m},$ i.e., satisfies property (*) for $m,$ then
$$D(\varphi)\equiv 0.$$
\end{theorem}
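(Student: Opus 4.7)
The plan is to lift the argument of Section 3 to the setting of the associated second-order PDE system \eqref{wzkzl}. With embedding $F=(f_1,\dots,f_m,g):(M,p_0)\to \mathcal Q^{2m+1}\subset\CC{m+1}$, normalized so that $F(p_0)=p_0=0$ after a coordinate shift, the role of the single vector field $\mathcal L$ is played by the $n$ commuting operators
$$\mathcal L_k=\frac{\partial}{\partial z_k}+w_{z_k}\frac{\partial}{\partial w},\qquad k=1,\dots,n,$$
tangent to every Segre variety of $M$. Writing $\mathcal Q$ in its standard complex defining form with signs $\epsilon_j=\pm 1$ and exploiting $F(S_p)\subset S_{F(p)}$ produces the master identity
$$\frac{g-\overline{C}}{2i}=\sum_{j=1}^{m}\epsilon_j f_j\overline{A}_j$$
along every Segre variety, where $(A_1,\dots,A_m,C)=F(p)$.

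Applying $\mathcal L^{\alpha}=\mathcal L_1^{\alpha_1}\cdots\mathcal L_n^{\alpha_n}$ for a multi-index $\alpha$ of positive length kills the antiholomorphic quantities $\overline{A}_j,\overline{C}$ and yields a linear relation in the $m+1$ unknowns $(\epsilon_1\overline{A}_1,\dots,\epsilon_m\overline{A}_m,-\tfrac{1}{2i})$. Selecting $m+1$ multi-indices $\alpha^{(1)},\dots,\alpha^{(m+1)}$ with $|\alpha^{(j)}|\leq d$, where $d=d(n,m)$ is the smallest integer with $\binom{n+d}{n}\geq m+2$, produces an $(m+1)\times(m+1)$ homogeneous linear system; since its solution has the nonzero last component $-\tfrac{1}{2i}$, the determinant must vanish. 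Expanding yields a polynomial identity
$$\sum_{\beta}\chi_\beta(z,w)\,M_\beta\bigl((w_{z_I})_{1\leq|I|\leq d}\bigr)=0$$
along each Segre variety, with explicit coefficients $\chi_\beta(z,w)$ built from $F$ and its derivatives.

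The analog of \autoref{lemmachi}, asserting that not every $\chi_\beta$ vanishes identically, is precisely where Property (*) is exploited. If all $\chi_\beta$ were zero, the identity would hold with the jet variables $(w_{z_I})$ treated as independent, and a multivariable Wronskian deduction extending the one in the proof of \autoref{lemmachi} would force the components of $F|_\Gamma$ to be linearly dependent on every polynomial complex curve $\Gamma=\{w=h(z)\}$ through $p_0$. Picking $m+1$ points in $M$ near $p_0$ whose $F$-images form a basis of $\CC{m+1}$ (guaranteed by Property (*)), perturbing them so that their $z$-components are in general position, and interpolating by a single polynomial map $h$ through $0$ and these points yields a contradiction. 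I expect this step to be the main obstacle, as the multivariable Wronskian apparatus and the verification that the operators $\mathcal L^{\alpha^{(j)}}$ span enough directions along a single curve require careful combinatorial bookkeeping.

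It remains to eliminate the jet variables in favor of the PDE defining function $\{\Phi_{ij}\}_{i,j=1}^n$. On any solution of \eqref{wzkzl}, each derivative $w_{z_I}$ with $|I|\geq 2$ is a universal polynomial in $z,w,w_{z_1},\dots,w_{z_n}$ and in finitely many total derivatives of the $\Phi_{ij}$'s, obtained by iterated prolongation of \eqref{wzkzl}. Substitution transforms the identity into one polynomial in the remaining free variables $\xi_k=w_{z_k}$. Differentiating sufficiently many times in $(\xi_1,\dots,\xi_n)$ yields a square linear system in the $\chi_\beta(z,w)$ whose determinant is a shift-invariant differential-algebraic expression $\mathcal D(\{\Phi_{ij}\})$, forced to vanish by the non-vanishing of the $\chi_\beta$. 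Non-triviality of $\mathcal D$ on the space of PDE defining functions is established by a direct analog of \autoref{Dnotzero}, exhibiting a concrete $\{\Phi_{ij}\}$ (depending, say, only on $\xi$) for which the Wronskian of the generating monomials is non-degenerate. Finally, transporting $\mathcal D(\{\Phi_{ij}\})=0$ through the invertible jet transitions $\varphi\leftrightarrow\rho\leftrightarrow\{\Phi_{ij}\}$ of Section 2.2, whose Jacobians near $p_0$ are controlled by $1-i\varphi_u$ and by a power of the Levi determinant, produces the required shift-invariant operator $D=D(n,m)$ with $D(\varphi)\equiv 0$, completing the proof.
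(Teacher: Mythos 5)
Your overall strategy coincides with the paper's: Segre-variety containment, an $(m+1)\times(m+1)$ determinant identity obtained by applying the operators $\mathcal{L}^{\alpha}$, non-vanishing of its coefficients via Property (*), elimination of the higher jet variables through prolongation of the associated PDE system, differentiation in $\xi$ to produce a square linear system whose determinant is the sought operator, and the final transfer $\{\Phi_{ij}\}\to\rho\to\varphi$. However, there is a genuine gap precisely at the step you flag as ``the main obstacle,'' and it is not a matter of combinatorial bookkeeping: you fix \emph{one} family of multi-indices $\alpha^{(1)},\dots,\alpha^{(m+1)}$ and claim that if all coefficients $\chi_\beta$ of this single determinant vanish identically, then a ``multivariable Wronskian deduction'' forces the components of $F|_\Gamma$ to be linearly dependent. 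In one variable ($n=1$, Section 3) this is legitimate, because the vanishing of the ordinary Wronskian of holomorphic functions of one variable does imply linear dependence. In several variables it is false: the vanishing of a \emph{single} generalized Wronskian of holomorphic functions does not imply linear dependence (e.g., the generalized Wronskian of $z_1,\,z_2,\,z_1z_2$ built from the rows $\mathrm{id},\,\partial_{z_1},\,\partial_{z_1}^2$ vanishes identically), so from one vanishing determinant you only get a pointwise dependence of the vectors $\Lambda^{\alpha^{(i)}}F$ with \emph{variable} coefficients. This is exactly why the paper's \autoref{lemmachm} is stated as an existence result --- there is \emph{some} admissible family of multi-indices whose coefficients $\eta_j$ do not all vanish --- and why its proof assumes vanishing for \emph{all} admissible families simultaneously; only that blanket hypothesis yields the stabilization condition $\dim E_l=\dim E_{l+1}$ required by \autoref{prop42} (the several-variables generalization of Wolsson's theorem proved in Appendix II), which in turn gives constant-coefficient dependence of the components of $F|_\Gamma$.

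This gap propagates into the structure of the final operator and destroys the universality claim as you have set it up. Since the good family of multi-indices produced by \autoref{lemmachm} depends on the particular germ $M$ and the embedding $F$, no single determinant operator built from one fixed family can annihilate all of $\tilde{\mathcal{M}}_m$; the paper must take the finite \emph{product} \eqref{prodop} of the operators $\mathcal{D}(\alpha^1,\dots,\alpha^{m+1})$ over all admissible families. The same point recurs in your non-triviality step: the paper's \autoref{multiind} asserts only that \emph{some} choice of differentiation multi-indices $\gamma^1,\dots,\gamma^s$ gives a non-zero operator (via Ostrowski's theorem plus, again, Wolsson: linear independence of the first row guarantees the non-vanishing of \emph{some} generalized Wronskian, not of a prescribed one). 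To repair your argument you would need to (i) run the contradiction over all admissible families of multi-indices and invoke \autoref{prop42} instead of a one-variable Wronskian property, and (ii) define $D(n,m)$ as the product of the finitely many resulting determinant operators, with the non-triviality of each factor established on the set of completely integrable right-hand sides as in \autoref{integrable}.
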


\begin{theorem}\label{T5}
For any pair of integers $m \geq n \geq 1,$ there exists a positive integer $\nu=\nu(n,m)$  such that a Zariski generic  real-algebraic hypersurface  $M \subset \mathbb{C}^{n+1}$ of any degree $ \geq \nu$  is not contained in $\tilde{\mathcal{M}}_m.$
%nondegenerately holomorphically embeddable  into a hyperquadric $\mathcal{Q}^{2m+1}\subset\mathbb{C}^{m+1}$.
\end{theorem}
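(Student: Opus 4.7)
The plan is to deduce \autoref{T5} from \autoref{T4} by combining the non-triviality of the universal differential-algebraic operator $D=D(n,m)$ with a standard Zariski-genericity argument on the space of algebraic defining functions, in direct parallel with the $(n,N)=(1,2)$ case treated in Section 3. Let $\ell=\ell(n,m)$ be the order of $D$, so that $D(\varphi)=Q(j^{\ell}\varphi)$ for some non-zero shift-invariant polynomial $Q$ on the appropriate jet space. By \autoref{T4}, every $M\in\tilde{\mathcal M}_m$ has defining function $\varphi$ satisfying $D(\varphi)\equiv 0$. Hence it suffices to exhibit, for every sufficiently large $k$, a Zariski open dense subset of the parameter space of real-algebraic hypersurfaces of degree $\leq k$ on which $D(\varphi)\not\equiv 0$.

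I would define $\nu=\nu(n,m)$ to be any integer for which one can exhibit a real polynomial $\varphi_{0}(z,\bar z,u)$ of degree $\leq \nu$ with $D(\varphi_{0})\not\equiv 0$; concretely, $\nu\leq \ell+1$ suffices, since the $\ell$-jet map from polynomials of that degree onto the jet space is surjective. Fix $k\geq \nu$ and consider the parameter space $W_{k}$ of real polynomials $P(z,\bar z,u,v)$ of degree $\leq k$ such that $\{P=0\}$ carries at least one smooth Levi-nondegenerate point. Near such a point $p_{0}$, the implicit function theorem provides a local real-analytic graphing function $v=\varphi(z,\bar z,u)$, whose Taylor coefficients depend rationally on the coefficients of $P$, with denominators controlled by a nowhere-vanishing partial derivative of $P$. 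Pulling back the identity $D(\varphi)\equiv 0$ through this rational dependence, we obtain a countable family of polynomial conditions on the coefficients of $P$, which cut out a Zariski closed subvariety $A_{k}\subset W_{k}$. Since $D(\varphi)$ is a real-analytic function on a connected neighborhood of $p_{0}$, the vanishing condition is independent of the particular choice of reference point within a connected component, so $A_{k}$ is well-defined.

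To show $A_{k}$ is proper, I invoke the witness $\varphi_{0}$: the polynomial $P_{0}:=v-\varphi_{0}(z,\bar z,u)$ is real and of degree $\leq k$, and its associated graphing function is $\varphi_{0}$, which by construction satisfies $D(\varphi_{0})\not\equiv 0$. The existence of such a real $\varphi_{0}$ (given the a priori only complex non-triviality of $Q$) follows from the totally real argument used in the proof of \autoref{T1} for $n=1,N=2$: the real subspace $V_{k}\subset V_{k}\otimes\mathbb{C}$ is totally real, so a polynomial condition vanishing on $V_{k}$ must vanish on the complexification, contradicting the non-triviality of $D$. Therefore $W_{k}\setminus A_{k}$ is a non-empty Zariski open subset of $W_{k}$, and any $M$ with coefficients in this open set satisfies $D(\varphi)\not\equiv 0$, hence lies outside $\tilde{\mathcal M}_{m}$ by \autoref{T4}.

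The main obstacle I expect is the algebraicity step: confirming that the condition $D(\varphi)\equiv 0$ really does pull back to a genuinely algebraic (rather than merely analytic) subset of the coefficient space $W_{k}$, uniformly over the different local charts that arise from different choices of base point $p_{0}$. The rational dependence of the jets of $\varphi$ on the coefficients of $P$ handles this for a fixed $p_{0}$; the uniformity is then obtained by observing that, once the identity $D(\varphi)\equiv 0$ holds at a single Levi-nondegenerate base point, it extends to a neighborhood by analyticity, so a single choice of $p_{0}$ per connected component already encodes the full constraint. An explicit upper bound for $\nu(n,m)$ can then be read off from the degree of $Q$, which itself follows from the explicit determinantal construction of $D$ indicated in \autoref{explicit} and the bounds recorded in Appendix I.
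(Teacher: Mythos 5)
Your proposal follows essentially the same route as the paper: the paper likewise deduces \autoref{T5} from \autoref{T4} by observing that the identity $D(\varphi)\equiv 0$ cuts out a Zariski-closed locus in the (irreducible) coefficient space of degree-$\leq k$ algebraic defining equations, and establishes properness of that locus via a polynomial witness obtained from surjectivity of the jet map, with reality of the witness handled by the same totally-real argument you cite from the proof of \autoref{T1}. The differences are cosmetic only: the paper takes $\nu(n,m)$ equal to the order of the operator from \autoref{T4} (rather than $\ell+1$), and it compresses your explicit rational-dependence/chart discussion into the brief statement that $W_k$ ``has a structure of algebraic manifold,'' so $D$ either vanishes identically on it or is Zariski-generically non-vanishing.
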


In what follows $(z,w)=(z_{1},...,z_{n},w)$ denote the coordinates in $\mathbb{C}^{n+1}$ and $(Z_{1},...,Z_{m},W)$ denote that in $\mathbb{C}^{m+1}.$ Write
the holomorphic embedding map

$$ F=(f_{1},...,f_{m}, g): (M,p_{0}) \rightarrow (\mathcal{Q}^{2m+1}, F(p_{0})),$$
for some Levi-nondegenerate point $p_{0}.$
%By assumption, $F$ is $k-$nondegenerate for some $k \geq 1.$ By shifting $p_{0}$ to  a generic point nearby if necessary, we are able to assume $k \leq m-n+1$ (e.g., \cite{el}). 
As in Section 3, by shifting the base point of the coordinates, we can assume $p_{0}=0.$

Let us write the target hyperquadric $\mathcal{Q}^{2m+1}= \mathcal{Q}^{2m+1}_{l}$ in the form
$$\mathrm{Im} W=-Z_{1} \overline{Z}_{1}-...-Z_{l}\overline{Z}_{l}+ Z_{l+1}\overline{Z}_{l+1}+...+ Z_{m}\overline{Z}_{m},$$
where $l$ is the signature of $\mathcal{Q}^{2m+1}.$

%After a holomorphic change of coordinates in $\mathbb{C}^{m+1}$ preserving the hyperquadric we may assume $F(0)=0.$
%Write
%$$L_{j}:=\frac{\partial}{\partial z_{j}}+ \frac{\partial \rho(z,\overline{z},\overline{w})}{\partial z_{j}}\frac{\partial}{\partial w}, 1 \leq j \leq n.$$
%Then $\{ L_{1},...,L_{n}\}$ forms a basis for the CR vector fields along $M$ near $0.$  We will write  $L^{\alpha}=L_{1}^{\alpha_{1}}...L_{n}^{\alpha_{n}}$
%for any multiindex $$\alpha=(\alpha_{1},...,\alpha_{n})\in\bigl(\mathbb Z_{\geq 0}\bigr)^n.$$

%The nondegeneracy of $F$ yields
%\begin{equation}\label{detnonzero}
%\mathrm{det}\left( \begin{matrix}
% 0 & ... & 0  & \frac{1}{2i}  \\
 %L^{\alpha^{1}}f_{1}(0) & ...  & L^{\alpha^{1}} f_{m}(0)  & 0 \\
 %... & ... & ...  & 0  \\
 %L^{\alpha^{m}} f_{1}(0) &  ... & L^{\alpha^{m}} f_{m}(0) & 0
%\end{matrix}\right) \neq 0.
%\end{equation}
%for some multiindices $\{ \alpha^1,...,\alpha^m\}$ with $|\alpha^j| \leq k.$

Let us now consider the Segre family $\{ S_{p}\}$ of $M.$ In view of $F(M) \subset \mathcal{Q}^{2m+1},$ any Segre variety $S_{p}$ of a point $p=(a,b)=(a_{1},...,a_{n}, b) \in U,$
considered as a graph $w=w(z)=\rho(z,\overline{a}, \overline{b})$ is contained in the Segre variety of $F(p)=(A_{1},...,A_{m},C).$ Thus we have:
\begin{equation}\label{Gsegre}
\frac{g-\overline{C}}{2i}=-f_{1}\overline{A}_{1}-...-f_{l}\overline{A}_{l}+ f_{l+1}\overline{A}_{l+1}+...+f_{m}\overline{A}_{m}|_{w=w(z)}.
\end{equation}

As in Section 3, we differentiate several times with respect to $z$ and write the result in terms of the $\mu-$jet 

$$(z,w,w^{(\alpha)})_{1 \leq |\alpha| \leq \mu}$$
of a Segre variety $S_{p}$ at a point $(z_{1},...,z_{n},w) \in S_{p}$ for some $\mu.$ Here we use the notation
$$w^{(\alpha)}=\frac{\partial^{|\alpha|} w}{\partial z_{1}^{\alpha_{1}}...\partial z_{n}^{\alpha_{n}}}$$ for any multiindex $\alpha=(\alpha_{1},...,\alpha_{n}).$ For first order derivatives we use the notation $w'_{j}= \frac{\partial w}{\partial z_{j}}.$

For the following, we fix $$k:=m-n+1.$$ 
Write the basis of holomorphic tangent vectors along $S_{p}$ as
$$ \mathcal{L}_{j}=\frac{\partial}{\partial z_{j}}+ w'_{j}\frac{\partial }{\partial w}, 1 \leq j \leq n.$$
 Next, for a multi-index $\alpha=(\alpha_1,...,\alpha_n)$ we write $\mathcal{L}^{\alpha}=\mathcal{L}_{1}^{\alpha_{1}}...\mathcal{L}_{n}^{\alpha_{n}}.$
%Finally, for each $1 \leq j \leq n, $ we write
%\begin{equation}\label{eqnalphaj}
%\epsilon^j:=(0,...,0,1,0...,0)
%\end{equation}
%where the component $``1"$ is at the $j^{\text{th}}$ position, so that $\mathcal{L}^{\epsilon^j}=\mathcal{L}_j, 1 \leq j \leq n.$
We apply $\mathcal{L}^{\alpha^{1}},...,\mathcal{L}^{\alpha^{m}},\mathcal{L}^{\alpha^{m+1}}$ to equation (\ref{Gsegre})
for some multi-indices $\alpha^1,\cdots, \alpha^{m+1}$ (precise form of which will  be determined later) with each $$|\alpha^i| \leq m-n+2=k+1.$$  
%where $\alpha^j=\epsilon^j$ for $1\leq j\leq n$, and $|\alpha^{n+1}| \leq |\alpha^{n+2}|\leq \cdots \leq |\alpha^{m+1}|.$ Moreover, we require $|\alpha^{n+1}| \leq2, \cdots, |\alpha^{m+1}| \leq m-n+2.$
%That is, $|\alpha^i|\leq i-(n-1)$ for $n+1 \leq i \leq m+1.$ 
As the result of the differentiations, we obtain:
\begin{equation}\label{Gsegrealpha}
-\mathcal{L}^{\alpha^{j}}\overline{A}_{1}-...-\mathcal{L}^{\alpha^{j}}f_{l}\overline{A}_{l}+ \mathcal{L}^{\alpha^{j}}f_{l+1}\overline{A}_{l+1}+...
+\mathcal{L}^{\alpha^{j}}f_{m}\overline{A}_{m}-\frac{1}{2i}(\mathcal{L}^{\alpha^{j}}g)=0,\quad 1 \leq j \leq m+1.
\end{equation}

Equations \eqref{Gsegrealpha} form a linear system for $(m+1)$ unkowns $(-\overline{A}_{1},...,-\overline{A}_{l},\overline{A}_{l+1},...,\overline{A}_{m},-\frac{1}{2i}).$ Note that it has a non-zero
solution, thus its determinant is $0.$ That is,
\begin{equation}\label{Gdetzero}
 \mathrm{det} \left(  \begin{matrix}
 \mathcal{L}^{\alpha^{1}}f_{1} & ...  & \mathcal{L}^{\alpha^{1}} f_{m} & \mathcal{L}^{\alpha^{1}} g \\
 ... & ... & ...  & ...  \\
 \mathcal{L}^{\alpha^{m}} f_{1} & ...  & \mathcal{L}^{\alpha^{m}} f_{m} & \mathcal{L}^{\alpha^{m}} g \\
 \mathcal{L}^{\alpha^{m+1}}f_{1} & ... & \mathcal{L}^{\alpha^{m+1}} f_{m} &  \mathcal{L}^{\alpha^{m+1}} g
\end{matrix}\right) \equiv 0~\text{on}~ S_{p}.
\end{equation}

Note that for a function $h$ we have
$$\mathcal{L}_{j}h = \frac{\partial h}{\partial z_{j}}+ w'_{j}\frac{\partial h}{\partial w};$$
$$\mathcal{L}_{i}\mathcal{L}_{j}h=\frac{\partial^2 h}{\partial z_{i}\partial z_{j}}+w'_{i}\frac{\partial^2 h}{\partial z_{j} \partial w}+
w'_{j}\frac{\partial^2 h}{\partial z_{i} \partial w} +w'_{i}w'_{j}\frac{\partial^2 h}{\partial w^2}+ w''_{ij}\frac{\partial h}{\partial w}.$$
In general, for a multiindex $\alpha_j,~ \mathcal{L}^{\alpha^j}h$  is a polynomial in $w^{(\beta)}, 1 \leq |\beta| \leq  |\alpha^j|,$  of degree $|\alpha^j|$ with coefficients in the jet space of $h;$  that is why the left hand side of \eqref{Gdetzero} has the form $H(z,w, (w^{(\beta)})_{1 \leq |\beta| \leq k+1})$ and is polynomial in $w^{(\beta)}, 1 \leq |\beta| \leq k+1,$ with coefficients in the $(k+1)$-jet of $F.$ 
Hence (\ref{Gdetzero}) reads as
\begin{equation}\label{HH}
H(z,w, (w^{(\beta)})_{1 \leq |\beta| \leq k+1})=:\eta_0(z,w)h_0((w^{(\beta)})_{1 \leq |\beta| \leq k+1})+ \cdots + \eta_{s}(z,w)h_{s}((w^{(\beta)})_{1 \leq |\beta| \leq k+1})=0.
\end{equation}
 Here $\{h_1, \cdots, h_{s}\}$ is the collection of all distinct monomials in $(w^{(\beta)})_{1 \leq |\beta| \leq k+1}$ of degree at most $d,$ where $d=|\alpha^1|+\cdots+|\alpha^{m+1}|.$ The coefficients $\eta_j(z,w)$ are certain functions, polynomialy depending on $j^{k+1}F$ (the latter dependence is fixed by the choice of $\alpha^{n+1},...,\alpha^{m+1}$).

We prove the following lemma on $\eta_j'$s. Write for each $1 \leq i \leq n,$ we write 
$$\epsilon^i=(0,...,0,1,0,...,0),$$
where the component $``1"$ is at the $i^{\text{th}}$ position, so that $\mathcal{L}^{\epsilon^i}=\mathcal{L}_i, 1 \leq i \leq n.$

%\bigskip

%ttttttttttttttttttttttttttttttttttttttttt delete? ttttttttttttttttttt

%\bigskip

%{\bf This part is temporarily kept as we may still use it in the proof of Proposition 4.4.}

% More precisely,

%\begin{equation}\label{Gdetp}
%\begin{split}
%P & =\mathrm{det}\left(\begin{matrix}
%  \mathcal{L}^{\alpha^{1}}f_{1} & ... & \mathcal{L}^{\alpha^{1}} f_{m}  & \mathcal{L}^{\alpha^{1}} g \\
% ... & ... & ... & ... \\
% \mathcal{L}^{\alpha^{m}} & ... & \mathcal{L}^{\alpha^{m}} f_{m} & \mathcal{L}%^{\alpha^{m}} g  \\
% (f_{1})_{w} & ... & (f_{m})_{w}  & g_{w}
%\end{matrix}\right),\\
%R & =\mathrm{det} \left(  \begin{matrix}
% \mathcal{L}^{\alpha^{1}}f_{1} & ...  & \mathcal{L}^{\alpha^{1}} f_{m} & %\mathcal{L}^{\alpha^{1}} g \\
% ... & ... & ...  & ...  \\
% \mathcal{L}^{\alpha^{m}} f_{1} & ...  & \mathcal{L}^{\alpha^{m}} f_{m} & %\mathcal{L}^{\alpha^{m}} g \\
% \mathcal{L}^{\alpha^{m+1}}f_{1}-w^{(\alpha^{m+1})}\frac{\partial f_{1}}{\partial w} & ... & \mathcal{L}^{\alpha^{m+1}} f_{m}-w^{(\alpha^{m+1})}\frac{\partial f_{m}}{\partial w} &  \mathcal{L}^{\alpha^{m+1}} g-w^{(\alpha^{m+1})}\frac{\partial g}{\partial w}
%\end{matrix}\right)
%\end{split}
%\end{equation}
%The fact that $P$ is independent of $(w^{(\alpha)})_{|\alpha| = k}$ is straightforward to check.

%tttttttttttttttttttttttttttttttttttttttt delete? tttttttttttttttttttttttt

%\bigskip

\begin{lemma}\label{lemmachm}
We can choose multi-indices $\alpha^i,\, 1 \leq i \leq m+1$  in such a way that
not all $\eta_j,\, 1 \leq j \leq s,$ in (\ref{HH}) are identical zeroes. Moreover, we can achieve
 $\alpha^i=\epsilon^i$ for $1 \leq i \leq n,$
and  $|\alpha^i| \leq i-(n-1)$ for $n+1 \leq i \leq m+1$.
\end{lemma}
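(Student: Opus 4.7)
The plan is to argue by contradiction: suppose that for every choice of multi-indices $\alpha^i$ satisfying the prescribed constraints, all coefficients $\eta_j$ in \eqref{HH} vanish identically. I will show this forces $F(U)$ into a proper affine subspace of $\mathbb{C}^{m+1}$, contradicting Property (*). The first step is a reformulation: vanishing of all $\eta_j$ is equivalent to saying that for every polynomial graph $\Gamma_h = \{w = h(z)\}$ with $h(0) = 0$ and every valid $(\alpha^1,\ldots,\alpha^{m+1})$, the generalized Wronskian determinant $\det(\partial^{\alpha^i}[F_j(z, h(z))])$ vanishes on a neighborhood of $z=0$, since substituting arbitrary jet values $(w^{(\beta)}) = (h^{(\beta)}(0))$ evaluates the polynomial $H$.

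The core of the plan is an iterative construction of rows $R_i = \mathcal{L}^{\alpha^i}F$ at the origin (with jet values prescribed by a choice of $h$) producing a subspace $V_i \subset \mathbb{C}^{m+1}$ of dimension $i$. For $i \leq n$ take $\alpha^i = \epsilon^i$ and $h$ vanishing to order $\geq 2$, so $R_i = \partial_{z_i}F(0)$; these are linearly independent by the full-rank of $dF$ from the embedding hypothesis, giving $\dim V_n = n$. At step $n+1$, take $\alpha^{n+1} = 2\epsilon^1$ with jet values $c_1 = 0,\, c_{11} = 1$: one computes $R_{n+1} = \partial_{z_1}^2 F(0) + \partial_w F(0)$, and since $\partial_w F(0) \notin V_n$ by full-rank, $R_{n+1} \notin V_n$ and $V_{n+1}$ has dimension $n+1$.

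For the inductive step at $i = n + l$ (with allowed order $l+1$), I plan to exploit that the jet variable $c_{(\alpha^i)}$ appears linearly in $\mathcal{L}^{\alpha^i}F$ with coefficient $\partial_w F(0)$, and that higher monomials in $(c_\beta)$ carry coefficients involving $\partial^\gamma F(0)$ for all $\gamma \in \mathbb{N}^{n+1}$ with $|\gamma| \leq l+1$. Thus, if the extension \emph{fails} at step $n+l$, meaning every possible $R_i$ lies in $V_{i-1}$, then viewing $\mathcal{L}^{\alpha^i}F(0)$ as a polynomial in $(c_\beta)$ whose image lies in the linear subspace $V_{i-1}$ forces each of its coefficients into $V_{i-1}$; letting $\alpha^i$ range over all multi-indices of order $\leq l+1$, one deduces that every partial derivative $\partial^\gamma F(0)$ with $1 \leq |\gamma| \leq l+1$ lies in $V_{i-1}$.

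To close the argument I propose to combine the stuckness condition with the Segre-embedding relation \eqref{Gsegre} itself: by differentiating the identity $\mathrm{Im}(g) = \sum \pm |f_j|^2$ along $M$ and using the associated PDE system of Section~2.2, higher-order derivatives of $F$ at the origin are expressed algebraically in terms of lower-order ones through the hyperquadric's Hermitian structure, allowing one to bootstrap the finite-order constraint $\partial^\gamma F(0) \in V_{i-1}$ ($|\gamma| \leq l+1$) to all orders; by Taylor expansion $F(U) \subseteq V_{i-1}$, a proper linear subspace of $\mathbb{C}^{m+1}$, contradicting Property (*). The main obstacle is precisely this propagation step: establishing that the hyperquadric-embedding structure suffices to propagate a bounded-order linear dependence of $F$'s partial derivatives to all orders. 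An alternative would be to shortcut this by choosing a polynomial graph $\Gamma$ through $m+1$ points at which $F$ spans $\mathbb{C}^{m+1}$ (available by Property (*)) and exploiting a multivariate Wronskian estimate adapted to the constrained multi-index pattern, but either route requires careful handling of the specific triangular bound $|\alpha^i| \leq i - (n-1)$.
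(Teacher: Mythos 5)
Your reduction of the hypothesis to ``all constrained generalized Wronskians of $F$ vanish identically along every polynomial graph $\Gamma_h$'' is correct and matches the paper's starting point, and your greedy construction works up to step $n+1$ (with the minor fix that one should let $c_{11}$ vary rather than set $c_{11}=1$: if $\partial_{z_1}^2F(0)+c_{11}\partial_wF(0)\in V_n$ for two values of $c_{11}$, then $\partial_wF(0)\in V_n$, a contradiction). But the proof stalls exactly where you say it does, and the missing step is not a technicality --- it is the whole content of the lemma. If the construction gets stuck at step $i=n+l$, all you obtain is that $\partial^{\gamma}F(0)\in V_{i-1}$ for $1\leq|\gamma|\leq l+1$. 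A bounded-order condition at a single point puts no constraint on the higher Taylor coefficients of $F$, so it cannot by itself force $F(U)$ into an affine subspace; and the proposed bootstrap via the hyperquadric identity $\Im g=\sum\pm|f_j|^2$ is unsupported --- nothing in that relation visibly confines derivatives of all orders to the subspace $V_{i-1}$. Note also that the trivial consequence of vanishing determinants, namely a \emph{pointwise} linear dependence with coefficients varying from point to point, is useless here: to land $F$ in an affine subspace one needs a dependence with \emph{constant} coefficients, and passing from varying to constant coefficients is precisely the hard part (for functions of several variables, vanishing Wronskians do not imply linear dependence without additional hypotheses).

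The paper closes this gap by never working pointwise at the origin. It fixes a polynomial graph $\Gamma=\{w=h(z)\}$ with $h(0)=0$, $dh(0)=0$, on which the hypothesis gives identical vanishing of all constrained generalized Wronskians \eqref{eqnlmdph}, while the embedding hypothesis gives the everywhere-full rank condition \eqref{eqnlmd}; it then invokes Proposition \ref{prop42} --- a generalization of Wolsson's theorem, proved in Appendix II by the Berhanu--Xiao argument that certain ratios of $m\times m$ minors are annihilated by all $\Lambda_{\nu}$ and hence are constants on $\Gamma$ --- to produce a linear relation $\lambda_1\Lambda_jf_1+\cdots+\lambda_{m+1}\Lambda_jg=0$ with \emph{constant} $\lambda$'s, valid for all $j$ on $\Gamma$. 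With $F(0)=0$ this forces $F(\Gamma)$ into a hyperplane through the origin. The contradiction with Property (*) then comes from the concrete curve $\Gamma_0=\{w=h_1(z_1)h_2(z_1)\}$ threaded through $m+1$ points where $F(p_1),\dots,F(p_{m+1})$ span $\mathbb{C}^{m+1}$, exactly as in your closing ``alternative'' sentence. So the route you mention as an alternative is in fact the viable one, but it is not a shortcut: the ``multivariate Wronskian estimate'' it needs (rank stabilization $E_l=E_{l+1}$ at generic points of $\Gamma$ implying a constant-coefficient relation, compatible with the triangular bound $|\alpha^i|\leq i-(n-1)$) is the key lemma your proposal lacks, and without it neither of your two routes reaches the conclusion.
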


\begin{proof}
Suppose, otherwise, that for $\alpha^i =\epsilon^i, 1 \leq i \leq n,$ and any choices of  multi-indices $\alpha^i, n+1 \leq i \leq m+1$ with each $|\alpha^i| \leq i-(n-1),$ we always get all $\eta_j,\, 1 \leq j \leq s,$ identically zero.  This means $H$ in \eqref{HH} is identical zero when $w^{(\beta)}$ are regarded as independent variables, for any such choice of $\alpha^i$'s.

{\bf Claim:} Let $\Gamma$ be any complex manifold in $\mathbb{C}^{n+1}$ defined by $w=h(z)$ passing through the origin, where $h(z)$ is a holomorphic polynomial in $z$ with $h(0)=0, \frac{\partial h}{\partial z_j}(0)=0$ for all $1 \leq j \leq n.$ Then the components of $F$ are linearly dependent over $\mathbb{C}$ on $\Gamma.$

{\bf Proof of Claim:} Write $\Lambda_j=\frac{\partial}{\partial z_j}+ \frac{\partial h(z)}{\partial z_j}\frac{\partial}{\partial w}, 1 \leq j \leq n.$ Note that $\Lambda_j|_0=\frac{\partial}{\partial z_j}.$ Then we have, since $F$ is an embedding,

\begin{equation}\label{eqnlmd}
\mathrm{dim}_{\mathbb{C}}\big(\mathrm{Span}_{\mathbb{C}} \{\Lambda_1 F(q), \cdots, \Lambda_n F(q)\}\big)=n
\end{equation}
for any point $q$ near $0$ on $\Gamma.$

However, by the hypotheses that $H$ (which, we recall, equals to the determinant \eqref{Gdetzero}) is identically zero,  for any choice of  multiindices $\alpha^i, n+1 \leq i \leq m+1$ with $|\alpha^i| \leq i-(n-1)\,\,\forall i,$ we have on $\Gamma$:
\begin{equation}\label{eqnlmdph}
\mathrm{dim}_{\mathbb{C}}\big(\mathrm{Span}_{\mathbb{C}} \{\Lambda_1 F(q), \cdots, \Lambda_n F(q), \Lambda^{\alpha^{n+1}} F(q), \cdots, \Lambda^{\alpha^{m+1}} F(q) \}\big) < m+1.
\end{equation}

We then have the following proposition, which can be regarded as a generalization of Wolsson's result \cite{Wo}.
\begin{proposition}\label{prop42}
Under the assumptions of  (\ref{eqnlmd}) and (\ref{eqnlmdph}) for any choices of  multiindices $\alpha^i, n+1 \leq i \leq m+1$ with each $|\alpha^i| \leq i-(n-1),$ we conclude that there exists  $\lambda_1,...,\lambda_{m+1}$ that are not all zero such that 
$$\lambda _1 \Lambda_j f_1 + \cdots +\lambda_m \Lambda_j f_m + \lambda_{m+1} \Lambda_j g=0$$
on $\Gamma$ for all $1 \leq j \leq n$ at once.
\end{proposition}
\begin{proof}[Proof of \autoref{prop42}]
When $n=1,$ the result follows from the result of Wolsson \cite{Wo}. In the general dimensional case, Proposition \ref{prop42} essentially follows from the framework in the paper of Berhanu and the second author \cite{BX}. To make the paper more self-contained, we include a proof in Appendix II.
\end{proof}
We return to the proof of the Claim. By Proposition \ref{prop42}, the expression
$$\lambda_1 f_1+\cdots+ \lambda_{m} f_m +\lambda_{m+1} g$$ 
is a constant on $\Gamma$ (since all its partial derivatives vanish on $\Gamma$). As we have $F(0)=0$, we finally conclude that the components of $F$ are linearly dependent on $\Gamma.$ This proves the claim. \endpf

\bigskip

\noindent{\it End of the proof of \autoref{lemmachm}.} Now, since $M, F$ are as in the definition of $\tilde{\mathcal{M}}_m,$ the image of $U$ under $F$ is not contained in any affine linear subspace of $\mathbb{C}^{m+1}.$ There exist $m+1$ points $p_j=(a_j^1, \cdots, a_j^n, b_j), 1 \leq j \leq m+1,$ near $0$ such that
\begin{equation}\label{eqnpm1m}
\mathrm{Span}_{\mathbb{C}}\{F(p_1), \cdots, F(p_{m+1})\}=\mathbb{C}^{m+1}.
\end{equation}
Perturbing $p_j'$s if necessary, we can assume that $a_i^1 \neq a_j^1$ if $i \neq j$ and $a_j^1 \neq 0$ for all $1 \leq j \leq m+1.$  Let $h_1(z_1)$ be the holomorphic polynomial in $z_1$ such that $h_1(a_j^1)=b_j, 1 \leq j \leq m+1,  h_1(0)=0.$ Let $h_2(z_1)$ be the holomorphic polynomial in $z_1$ such that $h_2(a_j^1)=1, 1 \leq j \leq m+1, h_2(0)=0.$ Set $h_0(z)=h_1(z_1)h_2(z_1).$ Let $\Gamma_0$ be the complex curve defined by $w=h_0(z).$ Then $\Gamma_0$ satisfies the assumptions in the claim. Moreover, all $p_j, 1 \leq j \leq m+1,$ are on $\Gamma_0.$ We then apply the result in the claim to get a contradiction with (\ref{eqnpm1m}). Thus Lemma \ref{lemmachm} is established.
\end{proof}

In what follows, we assume $\alpha^{1}, \cdots, \alpha^{m+1}$ to be chosen as desired in Lemma \ref{lemmachm}.

On the other hand, $M$ is Levi-nondegenerate at $0$ and thus its Segre family satisfies a completely integrable system of second order PDEs:
\begin{equation}\label{Gpde2}
w''_{ij}=\Phi_{ij}(z,w,w'_{1},...,w'_{n}), \quad 1\leq i,j \leq n,
\end{equation}
for holomorphic near a point $(0,\xi_{1}^0,...,\xi_{n}^0)$ functions
$$\{\Phi_{ij}\},\quad i,j=1,...,n.$$ We now regard both the $(k+1)$-jet prolongation of (\ref{HH}) and the PDE system (\ref{Gpde2}) as submanifolds $\mathcal{M}, \mathcal{E}$ respectively in the $(k+1)-$jet
space $J^{k+1}(\mathbb{C}^n,\mathbb{C})$ with the coordinates $(z,w,\xi_{\alpha})_{1 \leq |\alpha| \leq k+1}$, where $\alpha$ is a multiindex running through all
 $1 \leq |\alpha| \leq k+1$ (here $\xi_{\alpha}$ corresponds to $w^{(\alpha)}$; in particular, $\xi_l$ corresponds to $\frac{\partial w}{\partial z_l}$). Then $\mathcal{M}$ looks as
\begin{equation}\label{Gmanifold1}
H(z,w, (\xi_{\alpha})_{1 \leq |\alpha| \leq k+1})=\eta_0(z,w)h_0((\xi_{\alpha})_{1 \leq |\alpha| \leq k+1})+ \cdots + \eta_{s}(z,w)h_{s}((\xi_{\alpha})_{1 \leq |\alpha| \leq k+1})=0
%P(z,w,(\xi_{\alpha})_{1 \leq |\alpha| \leq k-1})\xi_{\alpha^{m+1}}+ R(z,w, (\xi_{\alpha})_{1 \leq |\alpha| \leq k})=0
\end{equation}
and $\mathcal{E}$ as
\begin{equation}\label{Gmanifold2}
\begin{aligned}
\xi_{ij}& =\Phi_{ij}(z, w, \xi);\\
\xi_{ijk}& =(\Phi_{ij})_{z_{k}}+ (\Phi_{ij})_{w}\xi_{k}+ \sum_{l=1}^n (\Phi_{ij})_{\xi_{l}}\Phi_{lk};\\
\cdots&\cdots\\
\xi_{\alpha}& =Q_{\alpha}\Bigl(\xi,(\Phi_{ij}^{(\beta)})_{|\beta| \leq |\alpha|-2}\Bigr);\\
\cdots & \cdots\\
\xi_{\alpha^{m+1}} & =Q_{\alpha^{m+1}}\Bigl(\xi,(\Phi_{ij}^{(\beta)})_{|\beta|\leq k-1}\Bigr)=:\tilde Q\Bigl(\xi,(\Phi_{ij}^{(\beta)})_{|\beta|\leq k-1}\Bigr).
\end{aligned}
\end{equation}
Here  $ \xi=(\xi_{1},...,\xi_{n})$, all $Q_{\alpha}$'s are certain {\em universal} polynomials in their arguments, and we use the notation
$$\xi_{ij}:=\xi_{(0,...,0,1,0,...,0,1,0,...,0)},$$
where $(0,...,0,1,0,...,0,1,0,...,0)$ is a multiindex with the two $1$'s at the $i^{\text{th}}$ and $j^{\text{th}}$ positions, and similarly for $\xi_{ijk}.$

We substitute all $\xi_{\alpha}$'s in $h_j$ by $Q_{\alpha}$ to obtain the  polynomials $h_j$ purely in terms of $\xi$ and $\{\Phi_{ij}\}_{1 \leq i,j \leq n}$
and their derivatives with respect to $z, w, \xi.$
The new polynomials  are denoted by

\begin{equation}\label{Gtilpr}
\tilde h_j(\xi, (\Phi_{ij}^{(\beta)})_{|\beta| \leq k-1}).
\end{equation}

%\bigskip
%ttttttttttttttttttttttttttttt delete? ttttttttttttttttttttt

 %We write $\tilde{P}, \tilde{R}$ as summations of monomials as follows:
%\begin{equation}\label{Gstpr}
%\begin{split}
%\tilde{P} & =\chi_{0}(z,w)\tilde{P}_{0}(\xi,(\Phi_{ij}^{(\beta)})_{|\beta|\leq k-3})+...+\chi_{t}(z,w)\tilde{P}_{t}(\xi,(\Phi_{ij}^{(\beta)})_{|\beta| \leq k-3});\\
%\tilde{R} & =\chi_{t+1}(z,w)\tilde{R}_{t+1}(\xi,(\Phi_{ij}^{(\beta)})_{|\beta| \leq k-2})+...+\chi_{s}\tilde{R}_{s}(\xi, (\Phi_{ij}^{(\beta)})_{|\beta| \leq k-2}),
%\end{split}
%\end{equation}
%for some $s \geq t \geq 1.$ Here $\{\tilde{P}_{0},...,\tilde{P}_{t}\}$ is the entire collection of all possible distinct  monomials in $\xi,(\Phi_{ij}^{(\beta)})_{|  \beta| \leq k-3}$ of degree at most $\frac{m(m+1)}{2}$ (See Lemma \ref{Gdegtil}) and  $\{\tilde{R}_{t+1},...,\tilde{R}_{s}\}$ is the entire collection of all possible distinct  monomials in $\xi,(\Phi_{ij}^{(\beta)})_{|  \beta| \leq k-2}$ of degree at most $\frac{(m+1)(m+2)}{2}.$(See Lemma \ref{Gdegtil}).  The coefficients $\chi_j$ here are polynomials in $j^{k+1}F,$ depending only
%on ${\alpha}^{j}, 1 \leq j \leq m+1$.  The integers $s=s(k,n), t=t(k,n)$ only depend on $k, n.$ We next prove the following lemma on $\chi_j'$s.
%We emphasize that the integers $s=s(\alpha^1,...,\alpha^m), t=t(\alpha^1,...,\alpha^m)$ only depend on $\{\alpha^1,...,\alpha^m\}.$

%tttttttttttttttttttttt delete? tttttttttttttttttttttt

%\bigskip

Arguing now as in Section 3 and considering the submanifolds $\mathcal{E}, \mathcal{M}$ of an appropriate jet space corresponding to the PDE systems \eqref{Gmanifold2},\eqref{Gmanifold1} respectively, we write up the fact that $\mathcal{E} \subset \mathcal{M}$ and obtain:
\begin{equation}\label{Gcondition}
\tilde H(z,w,\xi, (\Phi_{ij}^{(\beta)})_{|\beta| \leq k-1}):=\eta_0(z,w)\tilde h_0(\xi, (\Phi_{ij}^{(\beta)})_{|\beta| \leq k-1})+ \cdots +\eta_{s}(z,w)\tilde h_{s}(\xi,  (\Phi_{ij}^{(\beta)})_{|\beta| \leq k-1})=0.
\end{equation}
%Here recall we write $\tilde{Q}=Q_{\alpha^{m+1}}.$
The condition \eqref{Gcondition} gives us a scalar linear equation for  $\eta_{0}(z,w),...,\eta_{s}(z,w)$ with coefficients depending on $z, w, \xi.$  We then choose a collection of pairwise distinct multiindices
$$\gamma^{1},...,\gamma^{s}\in\bigl(\mathbb Z_{\geq 0}\bigr)^n\quad  \mbox{with}\quad  1\leq |\gamma^{j}| \leq j,$$ and perform $s$ differentiations of the above scalar linear equation
 by means of  the differential operators
$$\frac{\partial^{|\gamma^{1}|}}{\partial \xi^{\gamma^{1}}},...,\frac{\partial^{|\gamma^{s}|}}{\partial \xi^{\gamma^{s}}}.$$ Thus we obtain $s$ new identities
each of which is a scalar linear equation for the functions $\chi_{0},...,\chi_{s},$ and this gives us an $(s+1)\times(s+1)$ linear system. Recall again $\chi_{j}$'s do not all vanish identically, so that the determinant of this system, which we write as
\begin{equation}\label{thedet}\mathcal{D}(\alpha^1,...,\alpha^{m+1} | \gamma^{1},...,\gamma^{s})\left( \{\Phi_{ij}(z,w, \xi)\}_{1 \leq i,j \leq n} \right),
\end{equation}
 vanishes identically, where $\mathcal{D}(\alpha^1,...,\alpha^{m+1} | \gamma^{1},...,\gamma^{s})$ is an differential-algebraic operator. We shall now prove the following
\begin{proposition}\label{multiind}
There exist multiindices $\{{\gamma}^{1},..., {\gamma}^{s}\}$
with $1\leq|{\gamma}^j| \leq j, 1 \leq j \leq s$ such that $\mathcal{D}(\alpha^1,...,\alpha^{m+1} | {\gamma}^{1},...,{\gamma}^{s})$ is not identical zero.
\end{proposition}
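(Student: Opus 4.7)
The plan is to reduce the claim to exhibiting one specific PDE defining function $\{\Phi_{ij}\}$ together with one specific choice of multi-indices $\{\gamma^j\}$ for which the determinant $\mathcal{D}$ from \eqref{thedet} does not vanish. Since $\mathcal{D}$ is a polynomial differential operator on the jet space, a single non-vanishing evaluation would suffice to establish its non-triviality as an operator.

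My first step would be to simplify by restricting to $\Phi_{ij} = \Phi_{ij}(\xi)$, depending only on the jet coordinates $\xi = (\xi_1, \ldots, \xi_n)$. Under this restriction all derivatives with respect to $z$ and $w$ vanish, each universal polynomial $Q_\alpha$ in \eqref{Gmanifold2} reduces to a polynomial in $\xi$ and in the $\xi$-derivatives of the $\Phi_{ij}$, and each $\tilde h_j$ becomes a concrete polynomial in $\xi$ once the $\Phi_{ij}(\xi)$ are specified.

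The core step would be to prove that for a suitably generic choice of $\Phi_{ij}(\xi)$, the polynomials $\tilde h_0(\xi), \ldots, \tilde h_s(\xi)$ are $\mathbb{C}$-linearly independent. The key algebraic observation to establish is that the prolongation substitution $\Psi \colon w^{(\beta)} \mapsto Q_\beta$, regarded as a morphism between the polynomial ring in the formal variables $(w^{(\beta)})_{|\beta| \leq k+1}$ and the polynomial ring in $(\xi, \text{jets of } \Phi)$, is injective. This is because at each order $|\beta| = p + 2$, the expression $Q_\beta$ contains as leading summand a jet derivative of some $\Phi_{ij}$ of order exactly $p$ which does not occur in any $Q_{\beta'}$ with $|\beta'| < |\beta|$; thus the images $\{\Psi(w^{(\beta)})\}$ are algebraically independent. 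Injectivity of $\Psi$ then propagates $\mathbb{C}$-linear independence of the distinct monomials $h_j$ to $\mathbb{C}$-linear independence of the $\tilde h_j$ as polynomials in the indeterminates $(\xi, \text{jets of }\Phi)$, and specializing the jet indeterminates to the Taylor coefficients of a sufficiently generic polynomial $\Phi_{ij}(\xi)$ preserves this property, yielding linearly independent polynomials $\tilde h_j(\xi)$.

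The last step would be to extract from this linear independence the desired non-vanishing of the generalized Wronskian $\det(\partial^{\gamma^i} \tilde h_j)$, with multi-indices $\gamma^j$ satisfying $|\gamma^j| \leq j$. I would proceed by induction on $j$: assuming the upper-left $j \times j$ minor is already non-vanishing at a chosen base point $\xi^0$ for previously selected $\gamma^1, \ldots, \gamma^{j-1}$ with $|\gamma^i| \leq i$, the fact that $\tilde h_j$ is not a $\mathbb{C}$-linear combination of $\tilde h_0, \ldots, \tilde h_{j-1}$ combined with a Taylor-expansion argument around $\xi^0$ furnishes a multi-index $\gamma^j$ of order $\leq j$ extending the minor to a non-vanishing $(j+1)\times(j+1)$ minor. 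The main technical obstacle I anticipate is the second step: although the ``new jet appearing at each prolongation'' picture is intuitively clear, one must carefully track how lower-order jets of $\Phi$ enter the nonlinear prolongation formulae of \eqref{Gmanifold2} to rule out any accidental algebraic relations among the $\{Q_\beta\}$, and one must then verify that the resulting linear independence survives specialization to a polynomial $\Phi_{ij}(\xi)$.
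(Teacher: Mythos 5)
Your proposal follows the same skeleton as the paper's proof: restrict to defining functions $\Phi_{ij}=\Phi_{ij}(\xi)$, produce one such $\Phi$ for which the first-row entries $\tilde h_0,\dots,\tilde h_s$ of \eqref{thedet} are linearly independent functions, and then upgrade linear independence to a non-vanishing generalized Wronskian with the order bounds $1\le|\gamma^j|\le j$. Your final step, although stated only as a sketch, is sound and can be made rigorous: if every bordering row $\partial^{\gamma}$ with $1\le|\gamma|\le j$ gave an identically vanishing $(j+1)\times(j+1)$ determinant, then Cramer's rule applied to the nonsingular $j\times j$ block produces analytic coefficients $c_l(\xi)$ with $\partial^{\gamma}\tilde h_j=\sum_l c_l\,\partial^{\gamma}\tilde h_l$ for all $|\gamma|\le j$, and differentiating the relation for $\gamma^i$ and comparing with the relation for $\gamma^i$ increased by a unit multiindex (allowed, since $|\gamma^i|\le i\le j-1$) forces all $c_l$ to be constant, contradicting independence. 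This gives a self-contained substitute for the paper's citation of Wolsson \cite{Wo}, which is a genuine (and nice) difference.

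The gap is in your core step, in two places. First, your justification of the algebraic independence of the prolongation images compares each $Q_\beta$ only with $Q_{\beta'}$ of strictly lower order; it says nothing about relations among $Q_\beta$'s of the \emph{same} order, and there the ``private leading summand'' picture fails: with $\Phi=\Phi(\xi)$ one has, e.g., $Q_{\beta}=\sum_l(\Phi_{11})_{\xi_l}\Phi_{l2}$ for $\beta=(2,1,0,\dots,0)$ and $Q_{\beta'}=\sum_l(\Phi_{11})_{\xi_l}\Phi_{l3}$ for $\beta'=(2,0,1,0,\dots,0)$, so distinct same-order $Q$'s involve exactly the same top-order jet variables; independence is still true but needs a genuine argument (say, a Jacobian-rank computation). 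Second, and more seriously, linear independence of the $\tilde h_j$ as polynomials in the \emph{formal} jet indeterminates does not by itself survive substituting those indeterminates by the actual derivatives $\partial^{\gamma}\Phi_{ij}(\xi)$ of a concrete function, since after substitution the indeterminates become functionally dependent. ``Genericity'' only yields a dichotomy: for polynomials $\Phi_{ij}$ of a fixed degree, linear independence after specialization is a Zariski-open condition on the coefficients, so either a generic choice works or \emph{no} choice works; ruling out the second horn requires exhibiting one good specialization, and that existence statement is essentially the content of the proposition, not a routine verification (you yourself flag it as the anticipated obstacle, but leave it unresolved). It can be patched, for instance by choosing $s+1$ witnessing points in $(\xi,\mathrm{jet})$-space with pairwise distinct $\xi$-coordinates at which the evaluation matrix of the $\tilde h_j$ is nonsingular and Hermite-interpolating a polynomial $\Phi$ with those prescribed jets; but no such argument appears in the proposal. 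The paper disposes of both difficulties at once by a different device: it takes $w^*(z)$ satisfying no differential-algebraic equation (Ostrowski \cite{ostrowski}), builds $\Phi^*_{ij}(\xi)$ so that $w^*$ is a solution of \eqref{Gpde2}, and notes that any relation $\sum_j c_j\tilde h_j\equiv 0$ pulls back along this solution to $\sum_j c_j h_j\bigl(j^{k+1}w^*\bigr)\equiv 0$, a nontrivial differential-algebraic equation for $w^*$ --- impossible. This construction has the further advantage, exploited later in the paper (see \autoref{integrable}), that $\Phi^*$ corresponds to a \emph{completely integrable} system, whereas a generic polynomial $\Phi_{ij}(\xi)$ does not for $n\ge 2$; so even a repaired version of your witness would prove the proposition as stated but not the strengthening the paper needs afterwards.
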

\begin{proof}
Similarly as in the case $n=1$, we consider systems of the kind \eqref{Gpde2} with the right hand side depending on the derivatives $w_1',...,w_n'$ only, so that the right hand side in \eqref{Gmanifold2} depends on $\xi$ only (and does not depend on $z,w$). Then we consider the first row in the $(s+1) \times (s+1)$ determinant \eqref{thedet}:
\begin{equation}\label{Growpr}
(\tilde h_{0}(\xi,  (\Phi_{ij}^{(\beta)})_{|\beta| \leq k-1}), \cdots, \tilde h_{s}(\xi,  (\Phi_{ij}^{(\beta)})_{|\beta| \leq k-1})).
\end{equation}
%Note that $\tilde{Q}$ contains partial derivatives of $\Phi_{ij}$ of order $k-1:$
%$$(\Phi_{ij}^{(\beta)})_{|\beta| = k-1},$$
%while $\{\tilde{P}_{1},...,\tilde{P}_{t},\tilde{R}_{t+1},...,\tilde{R}_{s}\}$ only
%involves partial derivatives of $\Phi_{ij}$ of order $\leq k-2.$

We claim that we can choose analytic functions $\{\Phi_{ij}\}_{1 \leq i,j \leq n}$ in such a way that the components of (\ref{Growpr}) are linearly independent. To prove the claim, we choose a holomorphic function $$w=w^*(z):\,\,(\CC{n},0)\mapsto(\CC{},0)$$ with the following property: {\em $w^*(z)$ does not satisfy any differential-algebraic equation} (the existence of such functions is well known since the classical work of Ostrowski \cite{ostrowski}). 
By moving to a generic point $p$ near $0$ and applying a linear change of coordinates to make $p=0,$ we can assume $(w^*_{z_iz_j}(0))_{1 \leq i,j \leq n}$ is nondegenerate. Then we can express each $z_l$ as a function of $\bigl\{w^*_{z_j}\bigr\}_{j=1}^n$ near $0.$  Next, we choose a complete system of the kind \eqref{Gpde2} (with the defining function $\{\Phi^*_{ij}\}$ depending on $\xi$ only, as discussed above), having $w^*(z)$ as a solution: one can construct $\Phi^*_{ij}$ by expressing, for example, each $z_l$ as a function of $\bigl\{w^*_{z_j}\bigr\}_{j=1}^n$ and substituting the result into $w^*_{z_iz_j}(z)$. 
We then observe that, since $w^*(z)$ is a solution of the system \eqref{Gpde2} with the defining function $\{\Phi^*_{ij}\}$, then evaluating a monomial $h_{j}((w^{(\beta)})_{1 \leq |\beta| \leq k+1}) $ at the $(k+1)$ jet of the function $w=w^*(z)$ amounts (by the definition of $h_j(\xi, (\Phi_{ij}^{(\beta)})_{|\beta| \leq k-1})$) to substituting the $(k+1)$ jet of $w=w^*(z)$ into $h_j(\xi, (\Phi_{ij}^{(\beta)})_{|\beta| \leq k-1})$. Now, assume that for the above choice of the defining function in \eqref{Gpde2} there is a non-trivial linear dependence between the components of the first row of the determinant \eqref{thedet}. Then we conclude that the same  non-trivial linear dependence holds for the monomials  $h_{j}((w^{(\beta)})_{1 \leq |\beta| \leq k+1}) $ evaluated at the $(k+1)$ jet of the function $w=w^*(z)$. Since all the latter monomials are {\em distinct}, we obtain a non-trivial differential-algebraic equation for the function $w=w^*(z)$, which gives a contradiction and proves the claim.

 Now, using the above choice of the defining function in \eqref{Gpde2}, we make use of a result of Wolsson (\cite{Wo}) which states that there exists a non-vanishing identically generalized Wronskians of the components of the first row, and this yields the existence of the desired multiindices $\{{\gamma}^{1},...,{\gamma}^{s}\}.$
\end{proof}

We have now an important

\begin{remark}\label{integrable}
In fact, the proof of \autoref{multiind} implies a stronger fact, which is the non-triviality of the restriction of the operator $\mathcal D$ constructed in \autoref{multiind} onto the subset $I$ of all possible analytic right hand sides $(\{\Phi_{ij}\}_{1 \leq i,j \leq n})$ {\em corresponding to  completely integrable systems \eqref{Gpde2}}. 
%As a well known fact, the integrability of a system \eqref{Gpde2} reads as, first, the symmetry of the right hand side in the indices $i,j:$
%$$\Phi_{ij}=\Phi_{ji},$$ and second,   the symmetry in the indices $i,j,k$ of the third order derivatives $w_{ijk}'''$ (when the latter are computed by differentiating the $(i,j)$-equation in \eqref{Gpde2} in $z_k$). That is, the space $I$ is described as the subset of all analytic collections $(\{\Phi_{ij}\}_{1 \leq i,j \leq n})(z,w,\xi)$ symmetric in $i,j$ for which the expression
%\begin{equation}\label{symmetric}
%(\Phi_{ij})_{z_k}+(\Phi_{ij})_{w}\cdot \xi_k+\sum_{l=1}^n (\Phi_{ij})_{\xi_l}\cdot\Phi_{lk}
%\end{equation}
%as a function of  $(\{\Phi_{ij}\}_{1 \leq i,j \leq n})$ is symmetric in $i,j,k$. 
\end{remark}

Further, we emphasize the following.

\begin{remark}
The operator $\mathcal{D}(\alpha^1,...,\alpha^{m+1} | \gamma^{1},...,\gamma^{s})$ constructed in \autoref{multiind} is universal, in the sense that it depends on $\{\alpha^{1},...,\alpha^{m+1}\}$ and $\{\gamma^{1},...,\gamma^{s}\}$ only. In turn,
$\{{\gamma}^{1},...,{\gamma}^{s}\}$ are determined by $\{\alpha^{1},...,\alpha^{m+1}\},$ that is why we write in short
$$\mathcal{D}(\alpha^{1},...,\alpha^{m+1})=\mathcal{D}(\alpha^1,...,\alpha^{m+1} | {\gamma}^{1},...,{\gamma}^{s})$$ in what follows.
We also remark that  for each $\{\alpha^1,...,\alpha^{m+1}\},$ the order of derivatives of $\{\Phi_{ij}\}_{1 \leq i,j \leq n}$ that appears in $\mathcal{D}(\alpha^1,...,\alpha^{m+1})$
is at most $s+k-1 \leq s + m-n.$ Indeed, this can be easily seen from (\ref{Gtilpr}) and
the fact that $|{\gamma}^j| \leq s$ for all $1 \leq j \leq s$ in \autoref{multiind}. Thus, the order $d$ of the  differential-algebraic operator $\mathcal{D}(\alpha^{1},...,\alpha^{m+1})$  satisfies
\begin{equation}\label{order}
d\leq  s + m-n.
\end{equation}
\end{remark}

Now to obtain a differential-algebraic operator annihilating the right hand side of any nondegenerately embeddable hypersurface, we argue as follows. For a holomorphic nondegenerate embedding map $F: M \rightarrow \mathcal{Q}^{2m+1}$ defined near $0$ we consider all possible choices of
 $\{\alpha^{1},...,\alpha^{m+1}\},$ where they are as required in Lemma \ref{lemmachm}.   In particular there exist finitely many such choices of $\{\alpha^{1},...,\alpha^{m+1}\}$.  Then we obtain a collection of finitely many operators
$$\{\mathcal{D}_{1},..., \mathcal{D}_{\nu}\}$$
such that for any $M \subset \tilde{\mathcal{M}}_{m},$ there exists $l\leq\nu$ with $\mathcal D_l(\{\Phi_{ij}\}_{1 \leq i, j \leq n})=0$. Now the product operator
\begin{equation}\label{prodop}
\mathcal{D}(\{\Phi_{ij}\}_{1 \leq i,j \leq n}):=\mathcal{D}_{1}(\{\Phi_{ij}\}_{1 \leq i,j \leq n}) \cdots \mathcal{D}_{\nu}(\{\Phi_{ij}\}_{1 \leq i,j \leq n}).
\end{equation}
satisfies $\mathcal{D}(\{\Phi_{ij}\}_{1 \leq i,j \leq n}) \equiv 0$ if $\{\Phi_{ij}\}$ is associated to some $M \subset \tilde{M}_{m}.$

 The next step in proving \autoref{T4} is to transfer to the complex defining function $\rho(z,a,b)$, which we do similarly to the case $n=1$. Our goal is to express $\mathcal D(\{\Phi_{ij}\}_{1 \leq i, j \leq n})$   as a rational  function of the $(d+2)$-jet of $\rho$ (where $d$ is the order of $\mathcal D$).

In view of the Levi-nondegeneracy of $M$ near $0,$ the map
$$(z,a,b) \mapsto (z, \rho(z,a,b), \rho_{z}(z,a,b))$$
is a local biholomorphism between $\mathbb{C}^{2n+1}(z,a,b)$ and $\mathbb{C}^{2n+1}(z,w, \xi).$ Here we write $z=(z_{1},...,z_{n}), a=(a_{1},...,a_{n}).$ If
$$(z,w,\xi) \mapsto (z,A(z,w,\xi), B(z,w,\xi))$$
is the inverse biholomorphism, where we write $A(z,w,\xi)=(A_{1}(z,w,\xi),...,A_{n}(z,w,\xi)),$
then we have
\begin{equation}\label{e1}
w=\rho(z,a,b), \xi_{i}=\rho_{z_{i}}(z,a,b), \Phi_{ij}(z,w,\xi)=\rho_{z_{i}z_{j}}(z,a,b), 1 \leq i,j \leq n,
\end{equation}
where $a=A(z,w,\xi),b=B(z,w,\xi).$
Thus $\Phi_{ij}$ is already expressed in terms of the derivatives of $\rho.$ Then let us demonstrate the way
to express, for instance, $(\Phi_{ij})_{z_{k}}, 1 \leq k \leq n.$
First,
\begin{equation}\label{GPhiw}
(\Phi_{ij})_{z_{k}}=\rho_{z_{i}z_{j}z_{k}}+ \sum_{l=1}^{n} \rho_{z_{i}z_{j}a_{l}}(A_{l})_{z_{k}} + \rho_{z_{i}z_{j}b}B_{z_{k}}.
\end{equation}
For each fixed $k,$ we now need to compute $(A_{l})_{z_{k}}, B_{z_{k}}, 1 \leq l \leq n,$ in terms of $\rho$ and its derivatives. For that we differentiate the first two equations in \eqref{e1}
with respect to $z_{k}$ and get,
\begin{equation}\label{GsystemAB}
\begin{split}
0 & =\rho_{z_{k}}+\sum_{l=1}^{n}\rho_{a_{l}}(A_{l})_{z_{k}}+\rho_{b}B_{z_{k}}\\
0 & =\rho_{z_{i}z_{k}}+\sum_{l=1}^{n}\rho_{z_{i}a_{l}}(A_{l})_{z_{k}}+\rho_{z_{i}b}B_{z_{k}}, 1 \leq i \leq n
\end{split}
\end{equation}

Note that \eqref{GsystemAB} is a linear system for $(A_{l})_{z_{k}}, B_{z_{k}}, 1 \leq l \leq n$ whose determinant at the reference point $(0,0,\xi^0)$ being equal to the Levi determinant of $M$ at the origin. By the Levi-nondegeneracy and applying Cramer's rule, we can solve $(A_{l})_{z_{k}}, B_{z_{k}}, 1 \leq l \leq n$ as  rational functions of $2-$jet
of $\rho.$ Substituting into \eqref{GPhiw}, we get $(\Phi_{ij})_{z_{k}}$ as rational functions of the $3-$jet of $\rho.$ We similarly express the entire $d-$jet of  $\Phi_{ij}$
as a rational  function of the $(d+2)-$jet of $\rho$.
In this manner we obtain an algebraic differential operators $D^C,$ such that the condition
$\mathcal{D}(\{\Phi_{ij}\}_{1 \leq i,j \leq n})\equiv 0$ reads as $D^{C}(\rho(z,a,b)) \equiv 0.$

Furthermore, we can apply an  argument similar to that in Section 3 to find an differential-algebraic operator $D=D(n,m)$ of order $d+2$ such that $D^C(\rho) \equiv 0$ reads as $D(\varphi) \equiv 0,$ where $\varphi(z,a,u)$ is the real defining function of a hypersurface $M$.

Finally, it remains to show that  $D$ is non-trivial on the space of real-analytic defining functions $\varphi(z,a,u)$ of real hypersurfaces. Since the latter
subspace is totally real, it is enough to show the non-triviality of $D$ on the space of all possible analytic $\varphi(z,a,u)$, which is equivalent to the non-triviality of $D^C(\rho)$ on the space of all possible analytic $\rho(z,a,b)$. The desired non-triviality of $D^C(\rho)$ amounts to the non-triviality of $\mathcal D$  on the subspace $I$ of all possible analytic right hand sides $(\{\Phi_{ij}\}_{1 \leq i,j \leq n})$ {corresponding to  completely integrable systems \eqref{Gpde2}} and hence follows from \autoref{integrable}. This completes the proof of \autoref{T4}. \qed

It is not difficult now to verify the proof of \autoref{T1}.

\begin{proof}[Proof of \autoref{T1}] Recall that we have the decomposition \eqref{union}. Thus the desired differential-algebraic operator is given as the product
\begin{equation}\label{theoperator}
D(n,n)(\varphi)\cdots D(n,N)(\varphi).
\end{equation}
 As follows from the construction, \eqref{theoperator} annihilates real defining functions of all hypersurfaces from $\mathcal{M}_{N}$ and is shift-invariant. This completes the proof of \autoref{T1}.
 \end{proof}

Arguing then identically to the proof of \autoref{T2} in the case $n=1$, we obtain  the proof of \autoref{T5} with $\nu(n,m)$ being the order of the differential operator in \autoref{T4}.  We shall note that, as follows from  \eqref{prodop}, the differential operator in \autoref{T1} is obtained by multiplying several lower order operators. Using this observation, we can improve the bound for $\nu(n,m)$ to being equal to
\begin{equation}\label{maxorder}
\nu(n,m)=2+\mbox{max}\,\Bigl\{\mbox{ord}\,\mathcal D(\alpha^1,..,\alpha^m)\Bigr\},
\end{equation}
where $\alpha^1,...,\alpha^m\in\bigl(\mathbb{Z}_{\geq 0}\bigr)^n$ are all distinct and  satisfy the requirement in Lemma \ref{lemmachm}. More precisely, we require for $1 \leq i \leq n, \alpha^i=\epsilon^i,$
and for $n+1 \leq i \leq m+1, |\alpha^i| \leq i-(n-1).$

It is also immediate to see from \eqref{theoperator} and \eqref{union} that \autoref{T5} implies \autoref{T2} with
$\mu(n,N)$ being equal to
\begin{equation}\label{maxorder1}
\mu(n,N)=\mbox{max}\,\bigl\{\nu(n,m)\bigr\}_{m\in[n,N]}~.
\end{equation}

Explicit bounds  for $\nu(n,m),\mu(n,m)$ are given in Appendix I below.

In the end of this section, we would like to emphasize that it looks particularly interesting to study, by using the method of the present paper, the problem of characterization of the class of real hypersurfaces that can be holomorphically embedded into the particular hyperquadric $\mathbb{S}^{2N+1}$ (namely, the sphere). On the latter problem, see \cite{HZ},\cite{es1}, \cite{HLX}, \cite{HX}. In particular, \cite{HX}  shows that the examples of compact real algebraic strictly pseudoconvex hypersurfaces constructed in \cite{HLX} are not holomorpically embeddable into any spheres.  We conjecture here that the property of the embeddability into a sphere is characterized by differential-algebraic inequalities, supplementing the differential-algebraic equations introduced in the paper. (Such inequalities should arise from the Cramer's rule applied to a linear system of the kind \eqref{Gsegrealpha}).

\section{Appendix I}
In this section we  obtain explicit upper bounds for  $\nu(n,m),\mu(n,m)$ in \autoref{T5} and \autoref{T2} respectively.

In view of \eqref{maxorder},\eqref{maxorder1} and \eqref{order}, obtaining upper bounds for  $\nu(n,m),\mu(n,m)$ amounts to  estimating, for each $\{\alpha^1,...,\alpha^m\}$ as above, the total number $s$ of terms in \eqref{Gcondition}.  We first introduce,
\begin{definition}
Let $\lambda w^{(\beta^1)}...w^{(\beta^l)}, l\geq 1, $ be a monomial in the derivatives of $w$. We define the {\em weighted degree} of this monomial to be $|\beta^1|+...+|\beta^l|.$ If $W$ ia a polynomial which is a sum of monomials of such form, then the weighted degree of $W$ is defined to be the highest
weighted degree of these monomials.
\end{definition}
\begin{lemma}
Let $\mathcal{L}_{j}, 1 \leq j \leq n$ be as in Section 4. Let $h$ be one of the functions $f_{1},...,f_{m},g.$ For any multiindex $\alpha,$
$\mathcal{L}^{\alpha}h$ is a polynomial in $\{w^{(\beta)}\}_{|\beta|\leq |\alpha|}$ of weighted degree $|\alpha|.$
\end{lemma}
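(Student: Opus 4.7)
The plan is to prove both claims in the statement---polynomial dependence on $\{w^{(\beta)}\}_{|\beta|\leq |\alpha|}$ and the weighted-degree bound---simultaneously by induction on $|\alpha|$. The base case $|\alpha|=0$ is immediate since $\mathcal{L}^\alpha h = h$ involves no $w^{(\beta)}$ at all and thus has weighted degree $0$. For the inductive step I would assume the claim for some $\alpha$ and apply one more $\mathcal{L}_j = \partial_{z_j} + w'_j \partial_w$, using the jet-space rule $\partial_{z_j} w^{(\beta)} = w^{(\beta+\epsilon^j)}$ and $\partial_w w^{(\beta)} = 0$.

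The key Leibniz computation: for a typical monomial $\lambda(z,w)\, w^{(\beta^1)}\cdots w^{(\beta^l)}$ in $\mathcal{L}^\alpha h$ with $\sum_i |\beta^i| \leq |\alpha|$, applying $\mathcal{L}_j$ produces terms of two kinds. First, $(\lambda_{z_j}+w'_j\lambda_w)\, w^{(\beta^1)}\cdots w^{(\beta^l)}$, where the new $w'_j$ factor raises the weighted degree by at most $1$. Second, one term per index $i$ of the form $\lambda\, w^{(\beta^1)}\cdots w^{(\beta^i+\epsilon^j)}\cdots w^{(\beta^l)}$, where a single $|\beta^i|$ is incremented by $1$. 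In either case the resulting monomial involves only $w^{(\gamma)}$ with $|\gamma|\leq |\alpha|+1$ and has weighted degree at most $|\alpha|+1$. This closes the inductive step for the upper bound.

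To pin down that the weighted degree is exactly $|\alpha|$ rather than strictly less, I would track the specific monomial $(w'_1)^{\alpha_1}\cdots (w'_n)^{\alpha_n}$ in $\mathcal{L}^\alpha h$. The only way to produce this monomial is to choose the $w'_j \partial_w$ branch of $\mathcal{L}_j$ at each of the $|\alpha|$ differentiation steps and never subsequently differentiate any already-introduced $w'_j$ factor (a later $\partial_{z_k}$ acting on $w'_j$ would replace it by $w''_{jk}$, lowering the count of first-order factors). This canonical branch contributes exactly $(w'_1)^{\alpha_1}\cdots (w'_n)^{\alpha_n}\,\partial_w^{|\alpha|} h$, and viewing $h$ as a generic holomorphic function of $(z,w)$ (equivalently, treating the identity as universal in the jets of $h$), the coefficient $\partial_w^{|\alpha|} h$ is nontrivial.

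The main subtlety I expect is the uniqueness argument in the last paragraph---verifying that no other term in the Leibniz expansion cancels the monomial $(w'_1)^{\alpha_1}\cdots (w'_n)^{\alpha_n}$. This is clean because any competing branch at some step produces either a coefficient derivative not involving $w'$ or a factor $w^{(\gamma)}$ with $|\gamma|\geq 2$, and such factors cannot combine with the remaining pieces to yield a monomial purely in first-order derivatives. Hence the leading term survives and the weighted degree is exactly $|\alpha|$, completing the induction.
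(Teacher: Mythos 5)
Your proof is correct and follows essentially the same route as the paper: the paper simply displays the formulas for $\mathcal{L}_j h$ and $\mathcal{L}_i\mathcal{L}_j h$ and asserts that ``the general case can be proved by induction,'' which is exactly the induction on $|\alpha|$ you carry out in detail. Your additional leading-term analysis showing the degree is \emph{exactly} $|\alpha|$ goes beyond what the paper proves, and your genericity caveat there is the right one to make, since for a specific component $h$ with $\partial_w^{|\alpha|}h\equiv 0$ the degree can drop --- only the upper bound (which is all that Lemma 5.3 of the paper uses) holds unconditionally.
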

\begin{proof}
Note that
$$\mathcal{L}_{j}h = \frac{\partial h}{\partial z_{j}}+ w'_{j}\frac{\partial h}{\partial w};$$
$$\mathcal{L}_{i}\mathcal{L}_{j}h=\frac{\partial^2 h}{\partial z_{i}\partial z_{j}}+w'_{i}\frac{\partial^2 h}{\partial z_{j} \partial w}+
 w'_{j}\frac{\partial^2 h}{\partial z_{i} \partial w} +w'_{i}w'_{j}\frac{\partial^2 h}{\partial w^2}+ w''_{ij}\frac{\partial h}{\partial w}.$$
 Hence the conclusion holds for $n=1$ and $n=2.$ The general case can be proved by
induction.
\end{proof}

\begin{lemma}\label{Gwdegpq}
$H$ in (\ref{HH}) is a polynomial in $\{w^{(\beta)}\}_{|\beta| \leq k+1}$ of
weighted degree at most $\frac{(m+1)(m+2)}{2}$ with coefficients in $j^{k+1}F.$
\end{lemma}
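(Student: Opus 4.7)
The plan is to read off the bound directly from the determinantal expression \eqref{Gdetzero} for $H$, combined with the previous lemma on weighted degrees of $\mathcal{L}^\alpha h$ and the constraints on the multi-indices $\alpha^1,\dots,\alpha^{m+1}$ coming from Lemma \ref{lemmachm}.

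First I would expand the $(m+1)\times(m+1)$ determinant \eqref{Gdetzero} using the Leibniz formula. This gives $H$ as a sum of $(m+1)!$ signed products, where each product picks exactly one entry from each row, and the $j$-th row consists of the quantities $\mathcal{L}^{\alpha^j}f_1,\dots,\mathcal{L}^{\alpha^j}f_m,\mathcal{L}^{\alpha^j}g$. By the preceding lemma, every such entry is a polynomial in $\{w^{(\beta)}\}_{|\beta|\leq|\alpha^j|}$ of weighted degree exactly $|\alpha^j|$, whose coefficients lie in the $|\alpha^j|$-jet of $F$, hence in $j^{k+1}F$ since $|\alpha^j|\leq k+1$.

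Next I would invoke additivity of weighted degree under products: if $P_1,\dots,P_{m+1}$ are polynomials in derivatives of $w$ with weighted degrees $d_1,\dots,d_{m+1}$, then $P_1\cdots P_{m+1}$ has weighted degree at most $d_1+\cdots+d_{m+1}$. Applied to each term in the Leibniz expansion, this shows that every term, and hence $H$ itself, has weighted degree at most $\sum_{j=1}^{m+1}|\alpha^j|$, with coefficients in $j^{k+1}F$.

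It remains to bound $\sum_{j=1}^{m+1}|\alpha^j|$ using Lemma \ref{lemmachm}, which supplies $|\alpha^j|=1$ for $1\leq j\leq n$ and $|\alpha^j|\leq j-(n-1)$ for $n+1\leq j\leq m+1$. A direct computation gives
\begin{equation*}
\sum_{j=1}^{m+1}|\alpha^j|\;\leq\; n+\sum_{j=n+1}^{m+1}\bigl(j-n+1\bigr)\;=\;(m+1)+\frac{(m-n+1)(m-n+2)}{2}.
\end{equation*}
Since $n\geq 1$ we have $(m-n+1)(m-n+2)\leq m(m+1)$, and therefore
\begin{equation*}
(m+1)+\frac{(m-n+1)(m-n+2)}{2}\;\leq\; (m+1)+\frac{m(m+1)}{2}\;=\;\frac{(m+1)(m+2)}{2},
\end{equation*}
which yields the claimed bound. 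The only mildly delicate point is keeping track of the fact that the bound in Lemma \ref{lemmachm} is sharper than the crude $|\alpha^j|\leq k+1$, since otherwise one only recovers the weaker estimate $(m+1)(k+1)$; everything else is a routine consequence of multilinearity of the determinant and additivity of weighted degree.
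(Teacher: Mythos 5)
Your proof is correct and takes essentially the same approach as the paper: expand the determinant multilinearly, bound each entry of the $i$-th row in weighted degree via the preceding lemma, and add the bounds over the rows. The paper's (very terse) proof just uses the coarser consequence $|\alpha^i|\leq i$ of Lemma \ref{lemmachm}, so that $\sum_{i=1}^{m+1}|\alpha^i|\leq\sum_{i=1}^{m+1}i=\frac{(m+1)(m+2)}{2}$ immediately, whereas you route through the sharper bound $|\alpha^j|\leq j-(n-1)$ and then relax it; the conclusion and the mechanism are identical.
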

\begin{proof}
Note that $|\alpha^i| \leq i$ for each $1 \leq i \leq m+1.$ Thus $\mathcal{L}^{\alpha^i}f_{1},...,\mathcal{L}^{\alpha^i}f_{m}, \mathcal{L}^{\alpha^i}g$ are all polynomials in $\{w^{(\beta)}\}_{1 \leq |\beta| \leq i}$ of weighted order at most $i$ with coefficients in $j^i F.$ Then the statement  follows by an easy computation from its definition (\ref{Gdetzero}).
\end{proof}

We now need to convert $H(z,w,\{\xi_{\beta}\}_{1 \leq |\beta| \leq k+1}),$ where $\xi_{\beta}$ corresponds to $w^{(\beta)},$ to $\tilde{H}(z,w,(\Phi_{ij}^{(\beta)})_{|\beta| \leq k-2}).$ For that we study $\xi_{\beta}, |\beta| \geq 2$ as a polynomial in $\xi_{1},...,\xi_{n}, \{\Phi_{ij}^{(\gamma)}\}_{0 \leq |\gamma| \leq |\beta|-2}.$

Recall
 \begin{equation}
 \xi_{\beta}=Q_{\beta}(\xi_{1},...,\xi_{n}, \{\Phi_{ij}^{(\gamma)}\}_{0 \leq |\gamma| \leq |\beta|-2}), |\beta| \geq 2.
 \end{equation}
 where $Q_{\beta}$ is a polynomial in its argument.

\begin{lemma}\label{Gdegree}
Let $Q_{\beta}, |\beta| \geq 2$ be as above. Then $Q_{\beta}$ is a polynomial of degree $\leq |\beta|-1$
in its arguments.
\end{lemma}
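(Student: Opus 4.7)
The plan is to induct on $|\beta|$, using the fact that for any $k$ with $\beta_k\geq 1$, $\xi_\beta$ is obtained from $\xi_{\beta-\epsilon^k}$ by applying the total derivative along $z_k$ for solutions of the PDE system \eqref{Gpde2}. The base case $|\beta|=2$ is immediate: $\xi_{ij}=\Phi_{ij}$, so $Q_\beta=\Phi_{ij}$ is linear in its arguments, of degree $1=|\beta|-1$.

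For the inductive step, fix $|\beta|\geq 3$, choose $k$ with $\beta_k\geq 1$, and set $\beta'=\beta-\epsilon^k$, so $|\beta'|=|\beta|-1\geq 2$. Substituting the PDE relations $w_{z_k}=\xi_k$ and $w_{z_l z_k}=\Phi_{lk}$ into the chain rule, the relevant total derivative operator on functions of the jet variables is
\[
D_k \;:=\; \frac{\partial}{\partial z_k}+\xi_k\frac{\partial}{\partial w}+\sum_{l=1}^n \Phi_{lk}\,\frac{\partial}{\partial \xi_l},
\]
and $\xi_\beta=D_k Q_{\beta'}$. By the inductive hypothesis $\deg Q_{\beta'}\leq|\beta|-2$, so it suffices to prove $\deg(D_k Q_{\beta'})\leq \deg Q_{\beta'}+1$.

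I would then examine each of the three summands of $D_k$ separately. The operators $\partial/\partial z_k$ and $\partial/\partial w$ affect an argument $\Phi_{ij}^{(\gamma)}$ of $Q_{\beta'}$ only through the chain rule, replacing it by a single higher-order derivative $\Phi_{ij}^{(\gamma+e_{z_k})}$ or $\Phi_{ij}^{(\gamma+e_w)}$ of order $|\gamma|+1\leq|\beta|-2$; this is another admissible argument and the degree is unchanged. The operator $\partial/\partial\xi_l$ either acts on an explicit $\xi_l$-factor (lowering degree by one) or on a $\Phi$-argument via chain rule (preserving degree), so $\deg(\partial_{\xi_l}Q_{\beta'})\leq\deg Q_{\beta'}$. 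Finally, the prefactors $\xi_k$ and $\Phi_{lk}$ in the second and third summands each raise the degree by one. Summing these contributions, the total degree grows by at most one per application of $D_k$, yielding $\deg(D_k Q_{\beta'})\leq|\beta|-1$ and closing the induction.

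The only real care required is the bookkeeping of the chain rule: one must distinguish the coordinate partials inside $D_k$, which act on the $\Phi$-arguments by further differentiation, from partial derivatives with respect to the listed polynomial variables. Once this is made explicit, the degree bound is essentially forced by the structure of $D_k$, since each of its three summands contributes at most one factor of weight one per differentiation, matching the exponent $|\beta|-1$.
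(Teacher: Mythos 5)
Your proof is correct and follows essentially the same route as the paper: induction on $|\beta|$, with the base case $\xi_{ij}=\Phi_{ij}$ and the inductive step driven by the total derivative operator $D_k=\partial_{z_k}+\xi_k\partial_w+\sum_l\Phi_{lk}\partial_{\xi_l}$, which is precisely the structure visible in the paper's explicit $|\beta|=3$ computation. In fact your write-up is more complete than the paper's, which checks $|\beta|=2,3$ and dismisses the general case with ``can be proved by induction,'' whereas you carry out the degree bookkeeping for the inductive step explicitly.
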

\begin{proof}
When $|\beta|=2,$ the statement is trivial since
$$\xi_{ij}=\Phi_{ij}, 1 \leq i,j \leq n.$$
The case $|\beta|=3$ is verified as follows.
\begin{equation}
\begin{split}
\xi_{ijk}& =(\Phi_{ij})_{z_{k}}+ (\Phi_{ij})_{w}\xi_{k}+ \sum_{l=1}^n (\Phi_{ij})_{\xi_{l}}\xi_{lk}\\
& = (\Phi_{ij})_{z_{k}}+ (\Phi_{ij})_{w}\xi_{k}+ \sum_{l=1}^n (\Phi_{ij})_{\xi_{l}}\Phi_{lk}, 1 \leq i,j ,k \leq n.
\end{split}
\end{equation}
Then general case can be proved by induction.
\end{proof}
\autoref{Gdegree} leads to the following lemma.
\begin{lemma}\label{Gdegtil}
$\tilde{H}(z,w,\xi, (\Phi_{ij}^{(\beta)})_{|\beta| \leq k-1})$ are polynomials of degrees $\leq \frac{(m+1)(m+2)}{2}$ in the arguments $\xi,\Phi_{ij}^{(\beta)}$.
\end{lemma}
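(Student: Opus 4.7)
The plan is a direct bookkeeping combination of the two preceding lemmas. By \autoref{Gwdegpq}, every monomial of $H$ has the shape
$$\lambda(z,w,j^{k+1}F)\cdot w^{(\beta^1)}\cdots w^{(\beta^l)},\qquad |\beta^1|+\cdots+|\beta^l|\;\leq\;\frac{(m+1)(m+2)}{2},$$
with the coefficient $\lambda$ independent of the jet variables in which we are measuring degree. Passing from $H$ to $\tilde H$ amounts to replacing each jet variable $\xi_\beta$ (corresponding to $w^{(\beta)}$) with the polynomial $Q_\beta$ in $\xi=(\xi_1,\ldots,\xi_n)$ and the jets $\{\Phi_{ij}^{(\gamma)}\}_{|\gamma|\leq|\beta|-2}$ whenever $|\beta|\geq 2$, and leaving the first-order variables $\xi_{\epsilon^j}=\xi_j$ untouched.

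First I would record that for $|\beta|=1$ the substitution contributes degree exactly $1=|\beta|$ in the new variables, while for $|\beta|\geq 2$ \autoref{Gdegree} gives $\deg Q_\beta\leq |\beta|-1\leq |\beta|$. Therefore each monomial above becomes, after substitution, a polynomial in $\xi$ and the $\Phi_{ij}^{(\gamma)}$ whose degree is bounded by
$$\sum_{i=1}^{l}\deg(Q_{\beta^i})\;\leq\;\sum_{i=1}^{l}|\beta^i|\;\leq\;\frac{(m+1)(m+2)}{2}.$$
Taking the maximum over all monomials of $H$ then yields the claimed bound for $\tilde H$, completing the proof.

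I do not foresee any genuine obstacle: the lemma is a purely combinatorial consequence of the fact that the weighted-degree grading $w^{(\beta)}\mapsto|\beta|$ on the old jet variables dominates the ordinary-degree count in the new variables $(\xi,\Phi_{ij}^{(\gamma)})$ after the substitution $\xi_\beta\mapsto Q_\beta$. It is worth noting that the sharper estimate $\deg Q_\beta\leq|\beta|-1$ from \autoref{Gdegree} is in fact stronger than what is needed here, so even a mildly lossy induction in \autoref{Gdegree} would still deliver the desired bound $(m+1)(m+2)/2$ for $\tilde H$.
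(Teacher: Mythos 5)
Your proposal is correct and follows essentially the same route as the paper's own proof: both decompose $H$ into monomials of weighted degree at most $\frac{(m+1)(m+2)}{2}$ via \autoref{Gwdegpq}, substitute $Q_\beta$ for each $w^{(\beta)}$ using the degree bound of \autoref{Gdegree}, and sum the degrees. Your write-up is in fact slightly more careful than the paper's (you handle the $|\beta|=1$ case explicitly and use the correct constant $\frac{(m+1)(m+2)}{2}$, where the paper's proof contains a typo reading $\frac{m(m+1)}{2}$).
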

\begin{proof}
Write any monomial of $H$ in the following form:
$$h(s,t)w^{(\beta^1)}...w^{(\beta^{\tau})},$$
with $|\beta^1|+...+|\beta^\tau|\leq \frac{m(m+1)}{2}$ by \autoref{Gwdegpq}.
Now each $w^{(\beta^i)},$ by \autoref{Gdegree}, can be written as a polynomial $Q_{\beta^i}(\xi_{1},...,\xi_{n}, \{\Phi_{ij}^{(\gamma)}\}_{0 \leq |\gamma| \leq |\beta^i|-2})$ of degree $\leq |\beta^i|$ if $|\beta^i|\geq 2.$ The statement of \autoref{Gdegtil} then follows easily.
\end{proof}

\begin{lemma}

{\bf (1).} For each $l \geq 0,$ there are $\left(\begin{array}{c}
                                           l+n-1 \\
                                           n-1 \\
                                         \end{array}\right)$
distinct multiindices $\beta$ such that $|\beta|=l.$
Here we let $\left(
                 \begin{array}{c}
                   0 \\
                   0 \\
                 \end{array}
               \right)=1.$
 Consequently, $\{\xi, (\Phi_{ij}^{(\beta)})_{|\beta| \leq k-1}\}$ has
  \begin{equation}\label{Gterms}
  n+ \frac{n(n+1)}{2}\left(
                 \begin{array}{c}
                   n+k-1 \\
                   n \\
                 \end{array}
               \right)
\end{equation}
terms. %Here we set $\left(
               %       \begin{array}{c}
                  %      n+k-3 \\
                     %   n-1 \\
                      %\end{array}
                    %\right)=0,
%$ if $k=1.$
Moreover, since $k \leq m-n+1,$ we have (\ref{Gterms})  bounded by
\begin{equation}\label{pmn}
p(m,n):=n+\frac{n(n+1)}{2}\left(
            \begin{array}{c}
              m \\
              n \\
            \end{array}
          \right).
\end{equation}

\begin{proof}
The proof of the lemma follows from elementary combinatorics.
\end{proof}

\end{lemma}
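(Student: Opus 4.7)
The plan is to break the lemma into two elementary combinatorial assertions and verify each separately, then derive the bound.

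First I would establish the multiindex count $\binom{l+n-1}{n-1}$ by the classical stars-and-bars argument: a multiindex $\beta=(\beta_1,\ldots,\beta_n)\in(\mathbb{Z}_{\geq 0})^n$ with $|\beta|=l$ is in bijection with a distribution of $l$ indistinguishable balls into $n$ labeled bins, whose count is $\binom{l+n-1}{n-1}$. The degenerate case $l=0$ yields the single multiindex $(0,\ldots,0)$, consistent with the convention $\binom{0}{0}=1$ adopted in the statement.

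Next, I would count the terms in $\{\xi,(\Phi_{ij}^{(\beta)})_{|\beta|\leq k-1}\}$ by summing the contributions of the two pieces. The vector $\xi=(\xi_1,\ldots,\xi_n)$ contributes $n$ terms. For the second piece, I would invoke the symmetry $\Phi_{ij}=\Phi_{ji}$, which is inherited from the symmetry $w_{z_iz_j}=w_{z_jz_i}$ in the associated PDE system \eqref{Gpde2}, to reduce the number of independent index pairs $(i,j)$ from $n^2$ to $\binom{n+1}{2}=\frac{n(n+1)}{2}$. For each such pair and each fixed derivative order $l$ with $0\leq l\leq k-1$, part (1) provides $\binom{l+n-1}{n-1}$ distinct derivative multiindices, so summing in $l$ and using the hockey-stick identity
\[
\sum_{l=0}^{k-1}\binom{l+n-1}{n-1}=\binom{n+k-1}{n}
\]
gives the total $n+\tfrac{n(n+1)}{2}\binom{n+k-1}{n}$ claimed in \eqref{Gterms}.

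Finally, for the uniform bound $p(m,n)$, I would simply observe that the hypothesis $k\leq m-n+1$ is equivalent to $n+k-1\leq m$, and then use monotonicity of $\binom{\cdot}{n}$ in its upper argument to conclude $\binom{n+k-1}{n}\leq\binom{m}{n}$, which after substitution yields the stated bound \eqref{pmn}. I do not anticipate any substantive obstacle: the argument is pure bookkeeping, and the only point requiring attention is to correctly account for the symmetry of the indices $(i,j)$ in $\Phi_{ij}$ when forming the factor $\frac{n(n+1)}{2}$; overlooking this would erroneously produce $n^2$ in place of $\binom{n+1}{2}$.
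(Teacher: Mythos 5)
Your proposal is correct and matches the paper's intent: the paper's proof is simply the remark that the lemma ``follows from elementary combinatorics,'' and your stars-and-bars count, the symmetry reduction $\Phi_{ij}=\Phi_{ji}$ giving the factor $\frac{n(n+1)}{2}$, the hockey-stick summation yielding $\binom{n+k-1}{n}$, and the monotonicity step $n+k-1\leq m$ are exactly the bookkeeping the authors leave implicit. No gaps.
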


\bigskip

We are now able, using elementary combinatorics again, to give an estimate for the number of terms in $\tilde{H}$ and thus for the integer $s$. Note $\tilde{H}$ is a polynomial of degree at most $(m+1)(m+2)/2$ with at most $p(m,n)$ variables.

\begin{proposition}\label{bounds} One has
\begin{equation}
s \leq \left(
                                    \begin{array}{c}
                                     (m+1)(m+2)/2+p(m,n) \\
                                      p(m,n) \\
                                    \end{array}
                                  \right)                                 
\end{equation}
Here $p(m,n)$ is defined by \eqref{pmn}.
\end{proposition}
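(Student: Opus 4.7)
The plan is to combine the two preceding preparatory lemmas (the degree bound on $\tilde{H}$ from Lemma~\ref{Gdegtil} together with the variable count obtained just before the proposition) and then invoke a classical monomial enumeration identity. Roughly, Step~1 extracts the degree, Step~2 extracts the number of variables, and Step~3 is a stars-and-bars calculation.

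First, I would apply Lemma~\ref{Gdegtil} to $\tilde{H}(z,w,\xi,(\Phi_{ij}^{(\beta)})_{|\beta|\le k-1})$ to conclude that, as a polynomial in the arguments $\xi_1,\dots,\xi_n$ and $\{\Phi_{ij}^{(\beta)}\}_{|\beta|\le k-1}$, the total degree is bounded by
$$d \;:=\; \frac{(m+1)(m+2)}{2}.$$
In particular every monomial $\tilde h_j$ in the decomposition $\tilde H=\sum_j\eta_j(z,w)\tilde h_j$ has degree at most $d$.

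Next, I would use the variable-counting lemma preceding the proposition, which, using $k\le m-n+1$, bounds the total number of variables $\xi_l$ and $\Phi_{ij}^{(\beta)}$ (with $1\le i,j\le n$, $|\beta|\le k-1$) above by
$$v \;:=\; p(m,n) \;=\; n+\tfrac{n(n+1)}{2}\binom{m}{n}.$$
Then I would invoke the standard identity, provable by hockey-stick, $\sum_{l=0}^d\binom{l+v-1}{v-1}=\binom{d+v}{v}$, which gives the number of monomials of total degree at most $d$ in $v$ variables. Since each $\tilde h_j$ contributes a distinct monomial after reducing $\tilde H$ into its canonical monomial expansion in the variables counted in Step~2, and since the substitutions $w^{(\beta)}=Q_\beta(\xi,\{\Phi_{ij}^{(\gamma)}\})$ from \eqref{Gmanifold2} allow each jet $w^{(\beta)}$ to be expressed uniquely in terms of $\xi$ and the jets of $\Phi_{ij}$, no two distinct $h_j$ can collapse to the same reduced expression. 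Hence $s+1$ is at most the ambient monomial count, giving the desired bound $s\le\binom{d+v}{v}$.

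The main obstacle is a bookkeeping one rather than a deep mathematical issue: the $h_j$'s are originally defined as distinct monomials in the jet variables $w^{(\beta)}$, whereas the asserted bound is expressed in terms of the substituted polynomial $\tilde H$, which lives in the smaller set of variables $\xi_l$ and $\Phi_{ij}^{(\beta)}$. Verifying that the substitution induced by \eqref{Gmanifold2} is faithful enough not to artificially inflate the count (and also that it does not reduce the total below $s$) is the only subtle point; once this is checked, the combinatorial identity closes the proof.
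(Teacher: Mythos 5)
Your argument is correct and is essentially the paper's own proof: the paper likewise combines the degree bound of Lemma \ref{Gdegtil} with the variable count \eqref{pmn} and concludes by the elementary count $\binom{d+v}{v}$ of monomials of degree at most $d$ in $v$ variables. The faithfulness subtlety you flag at the end is one the paper does not even discuss---it simply bounds the number of terms of $\tilde H$ viewed as a polynomial of degree at most $(m+1)(m+2)/2$ in at most $p(m,n)$ variables---and it is harmless in any case, since the number of jet variables $w^{(\beta)}$, $1\le|\beta|\le k+1$, is itself at most $p(m,n)$, so the same binomial bound applies directly to the monomials $h_j$ without any appeal to injectivity of the substitution.
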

Combining now \autoref{bounds} with \eqref{order} and \eqref{maxorder}, we get
\begin{theorem}\label{thebound}
The integers $\nu(n,m)$ and $\mu(n,N)$ in \autoref{T4} and \autoref{T1} respectively can be explicitly chosen to be
\begin{equation}\label{maxnu}\nu(n,m):= 2+m-n+
                               \left(        \begin{array}{c}
                                    (m+1)(m+2)/2+p(m,n) \\
                                      p(m,n) \\
                                    \end{array}
                                  \right)
\end{equation}

\begin{equation}\label{maxorder2}
\mu(n,N):=\nu(n,N).
\end{equation}
Here $p(m,n)$ is the explicit expression given by \eqref{pmn}.

\end{theorem}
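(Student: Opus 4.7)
The plan is to simply chain the three inequalities already established in this section together with the formulas \eqref{maxorder} and \eqref{maxorder1} derived at the end of Section 4. The bookkeeping is straightforward, so the main issue is just to verify that the bound obtained is monotone in $m$.

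First, I would invoke \autoref{bounds}, which states that the number $s$ of distinct monomials appearing in \eqref{Gcondition} (i.e., in the polynomial $\tilde H$) satisfies
\[
s\leq \binom{(m+1)(m+2)/2+p(m,n)}{p(m,n)},
\]
where $p(m,n)$ is given by \eqref{pmn}. Combining this with the order estimate \eqref{order}, namely $d\leq s+m-n$, yields an upper bound on the order of the differential-algebraic operator $\mathcal D(\alpha^1,\dots,\alpha^{m+1})$ for every admissible choice of multiindices $\{\alpha^1,\dots,\alpha^{m+1}\}$ satisfying the constraints of \autoref{lemmachm}. Since this upper bound is independent of the particular choice of $\{\alpha^i\}$, it also bounds the maximum in \eqref{maxorder}. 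Adding $2$ (to account for the two additional jet orders picked up in the transition from $\Phi_{ij}$ to $\rho$, and then from $\rho$ to $\varphi$), one obtains
\[
\nu(n,m)\leq 2+(m-n)+\binom{(m+1)(m+2)/2+p(m,n)}{p(m,n)},
\]
which is precisely \eqref{maxnu}.

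It remains to derive the bound \eqref{maxorder2} on $\mu(n,N)$. From \eqref{maxorder1} we have $\mu(n,N)=\max\{\nu(n,m)\}_{m\in[n,N]}$, so I need only observe that the explicit expression on the right-hand side of \eqref{maxnu} is monotonically non-decreasing in $m$ for fixed $n$. This is immediate: the linear term $m-n$ is manifestly increasing, the quantity $p(m,n)=n+\tfrac{n(n+1)}{2}\binom{m}{n}$ is increasing in $m$ since $\binom{m}{n}$ is, and the binomial coefficient $\binom{(m+1)(m+2)/2+p(m,n)}{p(m,n)}$ increases as both the ``upper'' and ``lower'' arguments grow in a balanced way (one can check this by the standard identity $\binom{a+1}{b+1}\geq \binom{a}{b}$ applied successively as $m\mapsto m+1$). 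Therefore the maximum over $m\in[n,N]$ is attained at $m=N$, yielding $\mu(n,N)\leq \nu(n,N)$, as claimed.

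The only step that requires even a moment of care is the monotonicity verification in the second paragraph; all the substantive work, namely the weighted-degree estimates of Lemmas 5.2--5.4 and the counting of jet variables, has already been carried out to produce \autoref{bounds}. Once monotonicity is in hand, the statement is a direct substitution.
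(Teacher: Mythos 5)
Your proposal is correct and follows essentially the same route as the paper's proof: \eqref{maxnu} is obtained by chaining \autoref{bounds} with \eqref{order} and \eqref{maxorder}, and \eqref{maxorder2} follows from \eqref{maxorder1} once the right-hand side of \eqref{maxnu} is shown to be monotone in $m$. The only cosmetic difference is in the monotonicity check of $\binom{c+d}{d}$ in $(c,d)$ — the paper uses the symmetry $\binom{c'+d}{d}=\binom{c'+d}{c'}$ together with monotonicity in the top argument, while you iterate the Pascal-type inequality $\binom{a+1}{b+1}\geq\binom{a}{b}$ (plus, implicitly, $\binom{a+1}{b}\geq\binom{a}{b}$ to absorb the extra growth of the top argument) — and both are equivalent elementary verifications.
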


\begin{proof}
\autoref{bounds} and formulas \eqref{order}, \eqref{maxorder} immediately imply \eqref{maxnu}.
In order to prove \eqref{maxorder2}, we note that the expression
$$\left(        \begin{array}{c}
                                    (m+1)(m+2)/2+p(m,n) \\
                                      p(m,n) \\
                                    \end{array}
                                  \right)$$
 is monotonous in $m$ for fixed $n$. Indeed, if $m\leq m'$, set
$c:=(m+1)(m+2)/2,\,d:= p(m,n),\, c':=(m'+1)(m'+2)/2,\,d':= p(m',n)$. We have $c\leq c',\,d\leq d'$. Then
$$\left( \begin{array}{c}
                                    c+d \\
                                      d \\
                                    \end{array}
                                  \right)
                                  \leq
                                  \left( \begin{array}{c}
                                    c'+d \\
                                      d \\
                                    \end{array}
                                  \right)
                                  =
                                  \left( \begin{array}{c}
                                    c'+d \\
                                      c' \\
                                    \end{array}
                                  \right)
                                  \leq
                                  \left( \begin{array}{c}
                                    c'+d' \\
                                      c' \\
                                    \end{array}
                                  \right)
                                  =
                                  \left( \begin{array}{c}
                                    c'+d' \\
                                      d' \\
                                    \end{array}
                                  \right),$$
as required for the monotonicity. Now \eqref{maxorder2} follows from \eqref{maxorder1}.

\end{proof}

\section{Appendix II}
In this section, we provide a brief proof of Proposition \ref{prop42}. We first introduce following notions and definitions.
Let $\mathcal{M}$ be a $n-$dimensional (connected) complex manifold. Write $\Lambda_1, \cdots, \Lambda_m$ as a basis of holomorphic vector field of $\mathcal{M}.$ In particular, we assume $\Lambda_j h, 1 \leq j \leq n,$ is holomorphic whenever $h$ is holomorphic.  As before, for a multiple index $\alpha=(\alpha_1, \cdots, \alpha_n),$ write 
$\Lambda^{\alpha}=\Lambda_1^{\alpha_1}...\Lambda_n^{\alpha_n}.$ Let $H=(h_1, \cdots, h_N)$ be a holomorphic map from $\mathcal{M}$ to $\mathbb{C}^N, N \geq n.$

\begin{definition}\label{defnek}
For each $l \geq 1, q  \in \mathcal{M},$ define
$$E_l(q):=\mathrm{Span}_{\mathbb{C}}\{\Lambda^{\alpha} H(q): 1 \leq |\alpha| \leq l\}.$$
\end{definition}

\begin{remark}
It is easy to see that if $H$ is an embedding at $q,$ then $\mathrm{dim}_{\mathbb{C}}\left(E_1(q)\right)=n.$
\end{remark}
To establish Proposition \ref{prop42}, we need the following result.

\begin{theorem}\label{theoremlambda}
Let $O$ be an open subset of $\mathcal{M}.$ Let $l \geq 1, n \leq m < N.$ Assume that $\mathrm{dim}_{\mathbb{C}}\left(E_1(q)\right)=n, \mathrm{dim}_{\mathbb{C}}\left( E_{l}(q)\right)=\mathrm{dim}_{\mathbb{C}}\left( E_{l+1}(q)\right)=m$ for any $q \in O.$ Then there exists complex numbers $\lambda_1, \cdots \lambda_N$ that are not all zero, such that,
$$\lambda_1\Lambda_j h_1 + \cdots +\lambda_N  \Lambda_j h_N=0,~\text{for any}~1 \leq j \leq n.$$
\end{theorem}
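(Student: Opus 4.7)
The plan is to argue in three steps: (i) upgrade the stabilization $\dim E_{l} = \dim E_{l+1}$ to $\dim E_{k} = m$ for every $k \geq l$ on (possibly a smaller) open set; (ii) observe that as a consequence, the $m$-dimensional subspace $E_{l}(q) \subset \mathbb{C}^{N}$ admits a nonzero \emph{constant} annihilator $\lambda \in (\mathbb{C}^{N})^{*}$; (iii) restrict this annihilator to $E_{1}(q)$ to obtain the required linear dependence.

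For step (i), shrink $O$ so that there is a choice of multi-indices $\alpha^{1},\dots,\alpha^{m}$ with $|\alpha^{i}|\leq l$ such that $v_{i}(q):=\Lambda^{\alpha^{i}}H(q)$ is a holomorphic frame of $E_{l}(q)$ on $O$. For any multi-index $\gamma$ with $|\gamma|=l+1$, the hypothesis $\dim E_{l+1}(q)=\dim E_{l}(q)$ together with Cramer's rule produces holomorphic functions $c^{\gamma}_{i}(q)$ on $O$ with
\[
\Lambda^{\gamma}H \;=\; \sum_{i=1}^{m} c^{\gamma}_{i}(q)\,\Lambda^{\alpha^{i}}H \quad\text{on }O.
\]
Applying $\Lambda_{j}$ to this identity gives
\[
\Lambda_{j}\Lambda^{\gamma}H \;=\; \sum_{i}(\Lambda_{j}c^{\gamma}_{i})\,\Lambda^{\alpha^{i}}H \;+\; \sum_{i} c^{\gamma}_{i}\,\Lambda_{j}\Lambda^{\alpha^{i}}H,
\]
and both sums lie pointwise in $E_{l}(q)$ (the first by definition of the frame, the second because each $\Lambda_{j}\Lambda^{\alpha^{i}}H(q)\in E_{l+1}(q)=E_{l}(q)$). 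Hence $E_{l+2}(q)\subset E_{l}(q)$, and an obvious induction yields $E_{k}(q)=E_{l}(q)$ for all $k\geq l$ on $O$.

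For step (ii), fix $q_{0}\in O$ and choose any nonzero $\lambda_{0}\in E_{l}(q_{0})^{\perp}$; this is possible because $m<N$. The plan is to show that the \emph{constant} section $q\mapsto \lambda_{0}$ annihilates $E_{l}(q)$ for all $q$ in a neighborhood of $q_{0}$ (and then on all of $O$ by analytic continuation). For this, it suffices to check that for every multi-index $\alpha$ with $|\alpha|\leq l$ the scalar holomorphic function $\langle\lambda_{0},\Lambda^{\alpha}H(q)\rangle$ vanishes in a neighborhood of $q_{0}$, which by analyticity reduces to showing that all its $\Lambda$-derivatives vanish at $q_{0}$. But
\[
\Lambda^{\beta}\bigl\langle\lambda_{0},\Lambda^{\alpha}H\bigr\rangle(q_{0}) \;=\; \bigl\langle\lambda_{0},\Lambda^{\alpha+\beta}H(q_{0})\bigr\rangle \;=\; 0,
\]
since $\Lambda^{\alpha+\beta}H(q_{0})\in E_{|\alpha|+|\beta|}(q_{0})=E_{l}(q_{0})$ by step (i), and $\lambda_{0}$ was chosen in $E_{l}(q_{0})^{\perp}$.

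For step (iii), note that $E_{1}(q)\subset E_{l}(q)$, so the constant $\lambda_{0}=(\lambda_{1},\dots,\lambda_{N})$ annihilates $\Lambda_{j}H(q)$ for every $q\in O$ and every $j=1,\dots,n$, which is precisely the desired conclusion. The main subtlety will be step (i): one has to be careful that the holomorphic frame really extends the expression of \emph{every} $\Lambda^{\gamma}H$ with $|\gamma|=l+1$ in terms of lower-order derivatives on a common open set, and that the inductive derivative formula collapses to $E_{l}(q)$ at the right step. Once that stabilization is in place, step (ii) is a one-line Taylor argument and step (iii) is immediate.
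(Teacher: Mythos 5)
Your proof is correct, but it takes a genuinely different route from the paper's. The paper runs the Berhanu--Xiao determinant scheme: after fixing a frame $\Lambda^{\alpha^1}H,\dots,\Lambda^{\alpha^m}H$ of $E_l$ with a nonvanishing $m\times m$ minor, \autoref{lemmad0} shows that suitable quotients of $m\times m$ determinants are annihilated by every $\Lambda_\nu$ and hence are constants $c_i$, and the bordered-determinant lemmas \autoref{lemma64} and \autoref{lemmabx} then force $\Lambda^{\alpha^j}\bigl(h_{m+1}-\sum_i c_i h_i\bigr)=0$, which is the asserted dependence. You instead prove the flag stabilization $E_k(q)=E_l(q)$ for all $k\ge l$ on a shrunken $O$, and then observe that a constant covector $\lambda_0$ annihilating $E_l(q_0)$ kills every iterated $\Lambda$-derivative of $H$ at $q_0$, hence kills $E_l(q)$ for all $q$ near $q_0$ by Taylor expansion. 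This is shorter, avoids determinant manipulation entirely, and proves slightly more: $E_l(q)$ is locally a \emph{constant} subspace of $\mathbb{C}^N$, so every annihilator of $E_l(q_0)$ produces a dependence. What the paper's route buys is explicitness---the constants $c_i$ come out as concrete determinant quotients, in the same spirit as the effective determinantal operators built in the body of the paper. Three points you should make explicit, none of which affects correctness: (a) the identity $\Lambda^\beta\Lambda^\alpha H=\Lambda^{\alpha+\beta}H$ presumes the $\Lambda_j$ commute; this holds in the application to \autoref{prop42} (there the $\Lambda_j$ are coordinate fields along a graph), while in the general setting of \autoref{theoremlambda} the commutators $[\Lambda_i,\Lambda_j]$ are holomorphic combinations of the $\Lambda_k$, so one still gets $\Lambda^\beta\Lambda^\alpha H(q_0)\in E_{|\alpha|+|\beta|}(q_0)$, which is all your computation needs; (b) the reduction ``all iterated $\Lambda$-derivatives vanish at $q_0$ implies vanishing near $q_0$'' uses that $\Lambda_1,\dots,\Lambda_n$ frame the holomorphic tangent bundle, so iterated $\Lambda$-derivatives determine all Taylor coefficients; (c) the passage from a neighborhood of $q_0$ to all of $O$ uses connectedness of $\mathcal{M}$ together with global holomorphy of the functions $\langle\lambda_0,\Lambda_j H\rangle$.
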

\begin{proof}
The theorem basically follows from a similar argument as in \cite{BX} (See also \cite{FHX}). We sketch a proof here. By the assumption that $\mathrm{dim}_{\mathbb{C}}\left( E_{l}(q)\right)=m$, shrinking $O$ if necessary, there exist multi-indices $\alpha^1,\cdots, \alpha^m$ with each $|\alpha^i| \leq l.$ such that
\begin{equation}\label{eqnal1}
\mathrm{dim}_{\mathbb{C}}\left(\mathrm{Span}_{\mathbb{C}}\{\Lambda^{\alpha^1} H(q),..., \Lambda^{\alpha^{m}}H(q) \}\right)=m~\text{for every}~q \in O.
\end{equation}
Since $\mathrm{dim}_{\mathbb{C}}\left(E_1(q)\right)=n,$ we can choose in (\ref{eqnal1}) $\alpha^i=\epsilon^i, 1 \leq i \leq n.$ Here we write 
for each $1 \leq i \leq n, \epsilon^{i}=(0,...0,1,0,...,0),$ where the component $``1"$ is at the $i^{\text{th}}$ position, so that $\mathcal{L}^{\epsilon^i}=\mathcal{L}_i, 1 \leq i \leq n.$

The assumption that $\mathrm{dim}_{\mathbb{C}}\left( E_{l+1}(q)\right)=m$ implies that for any multiindex $\beta$ with $|\beta| \leq l+1.$
\begin{equation}\label{eqnalbe}
\mathrm{dim}_{\mathbb{C}}\left(\mathrm{Span}_{\mathbb{C}}\{\Lambda^{\alpha^1} H(q),..., \Lambda^{\alpha^{m}}H(q), \Lambda^{\beta} H(q) \}\right) =m~\text{for every}~q \in O.
\end{equation}

By equation (\ref{eqnal1}), we conclude that there exists $j_1, j_2, \cdots, j_m$ such that, by shrinking $O$ if necessary,
\begin{equation}
\left|
\begin{array}{llll}
\Lambda^{\alpha^1} h_{j_1} &  ... & \Lambda^{\alpha^1} h_{j_m} \\
... & ... & ... \\
\Lambda^{\alpha^m} h_{j_1} &  ... & \Lambda^{\alpha^m} h_{j_m}\\
\end{array} \right| \neq 0~\text{at every}~q \in O.
\end{equation}
To make the notations simple, we assume, without loss of generality, that $j_1=1, \cdots, j_m=m.$ That is,
\begin{equation}\label{eqnalphah}
\left|
\begin{array}{llll}
\Lambda^{\alpha^1} h_{1} &  ... & \Lambda^{\alpha^1} h_{m} \\
... & ... & ... \\
\Lambda^{\alpha^m} h_{1} &  ... & \Lambda^{\alpha^m} h_{m}\\
\end{array} \right| \neq 0~\text{at every}~q \in O.
\end{equation}

We conclude by equation
(\ref{eqnalbe}) that for any multiindex $\beta$ with $|\beta| \leq l+1.$
\begin{equation}\label{eqnbeta}
\left|\begin{matrix}
\Lambda^{\alpha^{1}}h_{1} & ... & \Lambda^{\alpha^{1}} h_{m}  & \Lambda^{\alpha^{1}} h_{m+1} \\
... & ... & ... & ... \\
\Lambda^{\alpha^{m}} h_1 & ... & \Lambda^{\alpha^{m}} h_{m} & \Lambda^{\alpha^{m}} h_{m+1}  \\
\Lambda^{\beta} h_1 & ... & \Lambda^{\beta} h_m  & \Lambda^{\beta} h_{m+1}
\end{matrix}\right| \equiv 0~\text{for every}~q \in O.
\end{equation}

\begin{lemma}\label{lemmad0}
For any $1 \leq \nu \leq n,$ and $i_1 < i_2 < \cdots < i_{m-1}$ with $\{i_1, \cdots,i_{m-1} \} \subset\{1, 2, \cdots, m \}$,
the following holds:
\begin{equation}\label{eqnquo}
\Lambda_{\nu}\left(\frac{\left|\begin{matrix}
	\Lambda^{\alpha^{1}}h_{i_1} & ... & \Lambda^{\alpha^{1}} h_{i_{m-1}}  & \Lambda^{\alpha^{1}} h_{m+1} \\
	... & ... & ... & ... \\
	\Lambda^{\alpha^m} h_{i_1} & ... & \Lambda^{\alpha^m} h_{i_{m-1}}  & \Lambda^{\alpha^m} h_{m+1}
	\end{matrix}\right|}{\left|\begin{matrix}
	\Lambda^{\alpha^{1}}h_{1} & ... & \Lambda^{\alpha^{1}} h_{m-1}  & \Lambda^{\alpha^{1}} h_{m} \\
	... & ... & ... & ... \\
	\Lambda^{\alpha^m} h_1 & ... & \Lambda^{\alpha^m} h_{m-1}  & \Lambda^{\alpha^m} h_{m}
	\end{matrix}\right|} \right) \equiv 0.
\end{equation}
\end{lemma}
\begin{proof}
The conclusion follows from (\ref{eqnbeta}). Indeed,  the numerator of the left hand side of (\ref{eqnquo}) can be written as a summation of terms that are multiples of the left hand side of (\ref{eqnbeta}) for certain choices of $\beta$. A detailed proof can be copied from page 1391 of \cite{BX}.
\end{proof}

Lemma \ref{lemmad0} implies that the function in the big parentheses in the equation (\ref{eqnquo}) is a constant in $O.$
We now fix some notations. If $i_1 < \cdots < i_{m-1}$ and $(i_1, \cdots, i_{m-1})=(1,2,...,\hat{i_0}, ...,m)$(Here $(1,2,...,\hat{i_0}, ...,m)$ means $(1,2,...,m)$ with the component $``i_0"$ missing.), then we write the constant
$$c_{i_0}:=\frac{\left|\begin{array}{cccc}
	\Lambda^{\alpha^1} h_{i_1} & \cdots & \Lambda^{\alpha^1} h_{i_{m-1}} & \Lambda^{\alpha^1} h_{m+1} \\
	\cdots & \cdots & \cdots & \cdots \\
	\Lambda^{\alpha^m} h_{i_1} & \cdots & \Lambda^{\alpha^m} h_{i_{m-1}} & \Lambda^{\alpha^m} h_{m+1}
	\end{array}\right|}{\left|\begin{matrix}
	\Lambda^{\alpha^{1}}h_{i_1} & ... & \Lambda^{\alpha^{1}} h_{i_{m-1}}  & \Lambda^{\alpha^{1}} h_{i_0} \\
	... & ... & ... & ... \\
	\Lambda^{\alpha^m} h_{i_1} & ... & \Lambda^{\alpha^m} h_{i_{m-1}}  & \Lambda^{\alpha^m} h_{i_0}
	\end{matrix}\right|}$$
Consequently,
\begin{equation}\label{eqnla}
\left|\begin{matrix}
\Lambda^{\alpha^{1}}h_{i_1} & ... & \Lambda^{\alpha^{1}} h_{i_{m-1}}  & \Lambda^{\alpha^{1}} (h_{m+1}-c_{i_0}h_{i_0}) \\
... & ... & ... & ... \\
\Lambda^{\alpha^m} h_{i_1} & ... & \Lambda^{\alpha^m} h_{i_{m-1}}  & \Lambda^{\alpha^m} (h_{m+1}-c_{i_0} h_{i_0})
\end{matrix}\right| \equiv 0~\text{in}~O.
\end{equation}

Since $i_0$ may vary from $1$ to $m$, we thus have $m$ constants: $c_1, ...,c_m.$ We now prove the following lemma.

\begin{lemma}\label{lemma64}
The following holds in $O$ for any $i_1 <i_2< ...< i_{m-1}$ with $\{i_1,...,i_{m-1}\} \subset \{1,2,...,m\}:$
$$\left|\begin{matrix}
\Lambda^{\alpha^{1}}h_{i_1} & ... & \Lambda^{\alpha^{1}} h_{i_{m-1}}  & \Lambda^{\alpha^{1}} (h_{m+1}-\sum_{i=1}^m c_{i}h_{i}) \\
... & ... & ... & ... \\
\Lambda^{\alpha^m} h_{i_1} & ... & \Lambda^{\alpha^m} h_{i_{m-1}}  & \Lambda^{\alpha^m} (h_{m+1}-\sum_{i=1}^m c_{i} h_{i})
\end{matrix}\right| \equiv 0~\text{in}~O.$$
\end{lemma}
\begin{proof}
Assume that $(i_1,...,i_{m-1})=(1,2,...,\hat{i_0},...,m).$ Note that if $i \neq i_0,$ i.e., $i \in \{i_1,...,i_{m-1}\},$
then 
\begin{equation}\label{eqnhi}
\left|\begin{matrix}
\Lambda^{\alpha^{1}}h_{i_1} & ... & \Lambda^{\alpha^{1}} h_{i_{m-1}}  & \Lambda^{\alpha^{1}} (c_{i}h_{i}) \\
... & ... & ... & ... \\
\Lambda^{\alpha^m} h_{i_1} & ... & \Lambda^{\alpha^m} h_{i_{m-1}}  & \Lambda^{\alpha^m} (c_{i} h_{i})
\end{matrix}\right| \equiv 0.
\end{equation}
Indeed, the last column of the above matrix is just a constant multiple of one of the first $m-1$ columns. Then Lemma \ref{lemma64} follows easily from equations (\ref{eqnla}) and (\ref{eqnhi}).
\end{proof}

We recall the following lemma from \cite{BX}.
\begin{lemma}\label{lemmabx}
Let ${\bf{b}}_{1},\cdots,{\bf{b}}_{n}$ and ${\bf{a}}$ be  $n$-dimensional column vectors with elements in $\mathbb{C}$, and let $B=({\bf{b}}_{1},\cdots,{\bf{b}}_{n})$ denote the $n\times n$ matrix.  Assume that $\mathrm{det}B \neq 0,$ and that
$\mathrm{det}({\bf{b}}_{i_{1}},{\bf{b}}_{i_{2}},\cdots,{\bf{b}}_{i_{n-1}},{\bf{a}})=0$ for any $1 \leq i_{1} < i_{2} < \cdots < i_{n-1} \leq n.$ Then $\bf{a}=0,$ where $\bf{0}$ is the $n$-dimensional zero column vector.
\end{lemma}

By Lemmas \ref{lemma64},~\ref{lemmabx}, and equation (\ref{eqnalphah}), we conclude that 
$$\Lambda^{\alpha^j}(h_{m+1}-\sum_{i=1}^m c_i h_i)=0,~\forall 1 \leq j \leq m.$$
In particular, when $1 \leq j \leq n, $ since $\Lambda^{\alpha^j}=\Lambda_j,$ We thus conclude that
$$\Lambda_j h_{m+1}-\sum_{i=1}^m c_i \Lambda_j h_i=0,~1 \leq j \leq n. $$
This establishes Theorem \ref{theoremlambda}.
\end{proof}

\begin{remark}\label{remark6q}
We state the following fact which is an immediate consequence of Theorem \ref{theoremlambda}. With the notions in Definition \ref{defnek}, assume that there do not exist
constants $\lambda_1,...,\lambda_N$ that are not all zero such that
$$\lambda_1 \Lambda_j h_1+ \cdots+ \lambda_N \Lambda_j h_N=0.$$
Let $l \geq 1, O$ an open subset of $\mathcal{M}.$ Assume $\mathrm{dim}_{\mathbb{C}}\left(E_1(q)\right)=n, \mathrm{dim}_{\mathbb{C}}\left(E_l(q) \right)=m < N,$
for any $q \in O$. Then for a generic $\widetilde{q} \in O,$ we have $E_l(\widetilde{q}) \subsetneqq E_{l+1}(\widetilde{q}).$
\end{remark}

Remark \ref{remark6q} implies Proposition \ref{prop42}.

\end{document}